\def\R{\mathbb{R}}
\def\N{\mathbb{N}}
\def\cF{\mathcal{F}}
\def\cI{\mathcal{I}}
\def\cN{\mathcal{N}}
\def\cS{\mathcal{S}}
\def\cT{\mathcal{T}}
\def\cU{\mathcal{U}}
\def\cV{\mathcal{V}}
\def\cX{\mathcal{X}}
\def\g{\gamma}
\def\d{\delta}
\def\p{\partial}
\def\o{\omega}
\def\veps{\varepsilon}
\def\O{\Omega}
\def\G{\Gamma}
\def\GD{{\Gamma_D}}
\def\transp{{\sf T}}
\def\hu{\widehat{u}}
\def\tu{\widetilde{u}}
\newcommand{\dv}[1]{\,{\mathrm d}#1}
\newcommand{\wcheck}[1]{#1\hspace{-.8ex}\mbox{\huge {\lower.45ex \hbox{$\textstyle \check{}$}}} \hspace{.5ex}}
\DeclareMathOperator{\id}{id}
\let\oldmarginpar\marginpar
\renewcommand\marginpar[1]{
  \oldmarginpar[\raggedleft\footnotesize #1]
  {\raggedright\footnotesize #1}}
\newtheorem{definition}{Definition}
\newtheorem{lemma}[definition]{Lemma}
\newtheorem{proposition}[definition]{Proposition}
\newtheorem{remark}[definition]{Remark}
\newtheorem{example}[definition]{Example}
\newtheorem{algorithm}[definition]{Algorithm}
\numberwithin{definition}{section}
\definecolor{modmag}{RGB}{179,0,229}
\newcommand{\EDT}{\color{black}}
\renewcommand{\text}{\textnormal}
\def\eoc{{\rm eoc}}
\def\bc{{\rm bc}}
\def\hm{{\rm hm}}
\def\uni{{\rm uni}}
\def\ener{{\rm ener}}
\def\stop{{\rm stop}}
\def\iso{{\rm iso}}
\def\bend{{\rm bend}}
\def\l{\mu}
\def\hv{\widehat v}
\begin{document}
\title[Quadratic constraint consistency]{Quadratic constraint
consistency in the projection-free approximation of harmonic maps
and bending isometries}
\author[G. Akrivis]{Georgios Akrivis}
\address{Department of Computer Science and Engineering,
University of Ioannina, 451$\,$10 Ioannina, Greece, and
Institute of Applied and Computational Mathematics,
FORTH, 700$\,$13 Heraklion, Crete, Greece}
\email{akrivis@cse.uoi.gr}
\author[S. Bartels]{S\"oren Bartels}
\address{Abteilung f\"ur Angewandte Mathematik,
Albert-Ludwigs-Universit\"at Freiburg, Hermann-Herder-Str.~10,
79104 Freiburg i.~Br., Germany}
\email{bartels@mathematik.uni-freiburg.de}
\author[C. Palus]{Christian Palus}
\address{Abteilung f\"ur Angewandte Mathematik,
Albert-Ludwigs-Universit\"at Freiburg, Hermann-Herder-Str.~10,
79104 Freiburg i.~Br., Germany}
\email{christian.palus@mathematik.uni-freiburg.de}
\date{\today}
\renewcommand{\subjclassname}{
\textup{2010} Mathematics Subject Classification}
\subjclass[2010]{35J62 (35J50 35J57 65N30)}
\begin{abstract}
We devise a projection-free iterative scheme for the approximation
of harmonic maps that provides a second-order accuracy of the
constraint violation and is unconditionally energy stable. A corresponding
error estimate is valid under a mild but necessary discrete regularity
condition. The method
is based on the application of a BDF2 scheme and the considered problem
serves as a model for partial differential equations with
holonomic constraint. The performance of the method is illustrated
via the computation of stationary harmonic maps and bending isometries.
\end{abstract}
\keywords{Harmonic maps, isometries, finite elements, iterative solution,
backward differentiation formula, nonlinear bending}

\maketitle

\section{Introduction}
A widely used approach to discretizing partial differential equations
that involve a nonlinear pointwise constraint follows~\cite{Alou97} and
is based on semi-implicit discretizations of gradient flows or evolution
problems with a linearized treatment
of the constraint. A corresponding
projection step to guarantee an exact satisfaction of the constraint in
appropriate quadrature points can only be used in special situations, e.g.,
if the problem is of second order and the finite element discretizations
under consideration
provide certain monotonicity properties; cf.~\cite{Bart05}. However, even if the
projection is
stable, it may increase the residual of an approximation. It was
observed in~\cite{Bart16} that the projection step can be omitted
in many situations and that the resulting constraint violation is controlled
(linearly) by the
step size independently of the number of iterations. We refer the reader
to~\cite[Ch.~7]{Bart15-book} for an overview of these results. If a high
accuracy in the approximation of the constraint is desired, then this
limits the efficiency of the numerical method. It is the goal of this
article to devise a variant of the projection-free scheme resulting
from combining~\cite{Alou97} and~\cite{Bart16} that provides second order
accuracy in the constraint violation under {\EDT sharp} discrete regularity conditions
but is guaranteed to satisfy a first order accuracy property
unconditionally. Harmonic maps serve as a model problem for partial differential
equations with holonomic constraint, the application of our results
to other problems is illustrated by the computation of bending isometries.

To explain the main ideas, we consider the numerical approximation
of harmonic maps into spheres that are stationary configurations of
the Dirichlet energy among unit-length vector fields for given
boundary conditions, i.e.,
\[
-\Delta u = \lambda u, \quad
|u|^2 = 1 \ \text{in } \O, \quad
u = u_D \ \text{on } \G_D, \quad
\p_n u = 0 \ \text{on } \p\O \setminus \G_D,
\]
in a bounded Lipschitz domain $\O\subset \R^d$ with boundary part
$\G_D \subset \p\O$ of positive surface measure and
a given function {\EDT $u_D$ which is assumed
to be equal to the trace of a function $\tu_D \in H^1(\O; \R^\ell)$ with
$|\tu_D|=1$ almost everywhere in $\O$}. The function $\lambda$ is the
Lagrange multiplier related to the unit-length constraint and is given
by $\lambda = |\nabla u|^2$; {\EDT see~\cite[Ch.~7]{Bart15-book}
and references therein}. A weak formulation determines a solution
$u\in H^1(\O;\R^\ell)$ with $u = u_D$ on $\G_D$ and $|u|^2 = 1$
in $\O$, via the equation
\begin{equation}\label{eq:hm_weak}
(\nabla u,\nabla v) = 0
\end{equation}
for all $v\in H^1_D(\O;\R^\ell)$ with $v\cdot u = 0$ in $\O$, i.e.,
$u$ is stationary with respect to tangential perturbations on the
unit sphere along $u$; {\EDT see, e.g.,~\cite{Alou97} or
the overview in~\cite[Ch.~7]{Bart15-book} for details on the derivation
of the weak formulation~\eqref{eq:hm_weak}}.
{\EDT The corresponding formulation as a saddle point problem has
been used in various articles as a foundation for the development of
approximation schemes; see,~e.g.,~\cite{QiTaWi09,BadGuiGut11}.}
In view of irregularity results for general
harmonic maps, cf.~\cite{Rivi95}, it is important to compute harmonic
maps with low Dirichlet energy.

The iterative scheme devised in~\cite{Alou97,Bart16} realizes
a semi-implicit time discretization of the gradient flow problem
\[
(\p_t u,v)_\star + (\nabla u,\nabla v) = 0
\]
for all $v\in H^1_D(\O;\R^\ell)$ subject to initial and boundary
conditions $u(0,\cdot)=u^0$ with $|u^0|^2=1$
and $u|_{\G_D} = u_D$, $v|_{\G_D}=0$,  and the constraints
\[
\p_t u \cdot u = 0, \quad v \cdot u = 0.
\]
{\EDT The choice of inner product $(\cdot,\cdot)_\star$ in the gradient
flow problem depends on the particular application, e.g.,~choosing the $L^2$
inner product $(\cdot, \cdot)$ yields approximation of the heat flow of harmonic maps
into the sphere, while choosing the $H^1_D$ inner product $(\nabla \cdot, \nabla \cdot)$
is well-suited for approximating stationary configurations.}
Particularly, with {\EDT the step size $\tau>0$ and} the backward difference quotient operator
$d_t u^n = (u^n-u^{n-1})/\tau$, it computes for given $u^0$
the sequence $(u^n)_{n=1,2,\dots}$ via the sequence of problems
\[
(d_t u^n,v)_\star + (\nabla u^n,\nabla v) = 0
\]
subject to homogeneous boundary conditions for $d_t u^n$ and
$v$ on $\G_D$, and the linearized unit length condition
\[
d_t u^n \cdot u^{n-1} = 0, \quad v \cdot u^{n-1} = 0.
\]
Note that here $d_t u^n$ is seen as the unknown variable which then
defines $u^n$ via $u^n = u^{n-1}+ \tau d_t u^n$.
The iteration is unconditionally well posed and energy decreasing,
i.e., choosing $v = d_t u^n$ yields that
\[
\|d_t u^n\|_\star^2 + \frac12 d_t \|\nabla u^n\|^2 + \frac{\tau}{2} \|\nabla d_t u^n\|^2= 0.
\]
This implies the summability of the discrete time derivatives $\|d_t u^n\|_\star^2$ and
hence the weak convergence of subsequences to solutions of~\eqref{eq:hm_weak}.
A bound for the constraint violation thus follows from the orthogonality
condition and $|u^0|^2 =1$, i.e., we have
\[
|u^n|^2 -1 = |u^{n-1}|^2 -1 + \tau^2 |d_t u^n|^2 = \dots
= \tau^2 \sum_{j=1}^n |d_t u^j|^2.
\]
Taking the $L^1$ norm of this identity, the sum on the right-hand side is bounded by
$\tau (c_\star/2) \|\nabla u^0\|^2$ provided that the induced norm $\|\cdot \|_\star$ controls
the $L^2$ norm up to a factor $c_\star^{1/2}$.

The iterative scheme can also be seen as a backward Euler method for the
$L^2$ flow of harmonic maps if the flow metric is the $L^2$ inner product.
For such evolution problems the discretization based on higher order
time stepping methods has recently been investigated in~\cite{AFKL21};
cf.\ also~\cite{BaKoWa23} for a nodal treatment of the unit-length constraint.
Provided that a sufficiently regular solution exists, quasi-optimal error
estimates have been derived which imply bounds on the constraint violation.
We study here the violation of the constraint in the absence of a smooth and
unique solution. The use of the $H^1$ seminorm $\|\cdot\|_\star = \|\nabla \cdot\|$
defines an $H^1$ gradient flow. {\EDT Being equal to the energy norm,
this scalar product is natural for the minimization problem and acts as a
preconditioner. This typically leads to a faster energy decay and, thus, is
a preferable choice if one is interested in approximating stationary configurations.}

The generalization of the semi-implicit backward Euler method for
the harmonic map heat flow devised in~\cite{AFKL21} computes for
given $u^0,\dots,u^{k-1}$ the sequence $(u^n)_{n=k,k+1,\dots}$ via the scheme
\begin{equation*}
 (\dot{u}^n,v)_\star + (\nabla u^n,\nabla v)= 0
\end{equation*}
subject to homogeneous boundary conditions on $\G_D$ and the linearized constraint
\[
\dot{u}^n \cdot \hu^n = 0, \quad v \cdot \hu^n = 0.
\]
Here $\dot{u}^n$ is a higher order approximation of the time derivative
and $\hu^n$ a suitable explicit extrapolation. Adopting concepts from the construction
of backward differentiation formula (BDF) methods as analyzed in, e.g.,~\cite{HaiWan96,AkrLub15},
approximations with second order consistency properties are given by
\[
\dot{u}^n = \frac{1}{2\tau}\big(3 u^n - 4 u^{n-1} + u^{n-2}\big),
\]
or equivalently $2\dot{u}^n = 3d_t u^n - d_t u^{n-1}$, and
\begin{equation*}
  \hu^n = u^{n-1} + \tau d_t u^{n-1} = 2 u^{n-1} - u^{n-2}.
\end{equation*}
In particular, we have that $u^n = \big(4 u^{n-1} - u^{n-2} + 2 \tau \dot{u}^n\big)/3$.

The iteration is initialized with one step of the linearized  backward
Euler method and then repeated until the discrete time-derivatives are sufficiently
small or some final time $T>0$ is reached. {\EDT This initialization step does not
affect the convergence behavior in the $L^2$ norm; cf.~\cite[Theorem~1.7]{Thom06-book} for related
details.
As an alternative in discrete settings, a first iterate that satisfies a nodal
unit-length constraint exactly may be obtained by nodally projecting the backward
Euler iterate to the unit sphere, or by employing a nonlinear scheme; see, e.g.,~\cite{GutRes17}
for a Crank-Nicolson type method. However, such approaches typically involve
additional constraints, e.g., on underlying triangulations, to guarantee energy stability.}
Note that we always regard $\dot{u}^n$
as the unknown variable in the time steps which is then used to specify the
new iterate $u^n$. The function $\dot{u}^n$ satisfies homogeneous boundary
conditions on $\G_D$ if $u^{n-2},u^{n-1},u^n$ equal $u_D$ on $\G_D$.

\begin{algorithm}\label{alg:bdf2_iter}
Choose $u^0\in H^1(\O;\R^\ell)$ with $u^0|_\GD = u_D$ and $|u^0|^2 = 1$. \\
(0) Compute $d_t u^1 \in H^1_D(\O;\R^\ell)$ such that $d_t u^1 \cdot u^0 = 0$ and
\[
(d_t u^1, v)_\star + (\nabla [u^0+\tau d_t u^1],\nabla v) = 0
\]
for all $v\in H^1_D(\O;\R^\ell)$ with $v\cdot u^0 = 0$; set $u^1 = u^0+\tau d_t u^1$ and $n=2$. \\
(1) Set $\hu^n = 2 u^{n-1} - u^{n-2}$ and
compute $\dot{u}^n \in H^1_D(\O;\R^\ell)$ with $\dot{u}^n \cdot \hu^n = 0$ and
\[
(\dot{u}^n, v)_\star + \frac13 (\nabla [4 u^{n-1} - u^{n-2} + 2 \tau \dot{u}^n],\nabla v) = 0
\]
for all $v\in H^1_D(\O;\R^\ell)$ with $v\cdot \hu^n = 0$; set
$u^n =(4 u^{n-1} - u^{n-2} + 2 \tau \dot{u}^n)/3$. \\
(2) Stop if $\|\dot{u}^n\|_\star + \|d_t u^n\| \le \veps_{\rm stop}$ or $n\tau \ge T$. \\
(3) Increase $n \to n+1$ and continue with~(1).
\end{algorithm}

{\EDT
The stopping criterion in Step~(2) of the algorithm controls the residuals
in the partial differential equation~\eqref{eq:hm_weak} and the involved orthogonality relation; in
particular, it provides control over the difference between $\hu^n$ and $u^n$.
}

Since the subspace of functions $v\in H^1_D(\O;\R^\ell)$ satisfying $v\cdot \hu^n = 0$ in $\O$ is
{\EDT weakly} closed, the Lax--Milgram lemma implies that the iteration is unconditionally well defined
and terminates within a finite number of iterations. More precisely, we
{\EDT show in Proposition~\ref{prop:ener_stab}} that
\[
\|\nabla \cU^N\|_G^2 + \tau \sum_{n=2}^N \|\dot{u}^n\|_\star^2 \le \|\nabla \cU^1\|_G^2,
\]
where $\cU^n = (u^n,u^{n-1})$ and $\|\cdot\|_G$ denotes a BDF-adapted variant of the $L^2$
norm {\EDT defined in Section~\ref{subsec:bdf2norm}}.
{\EDT An elementary calculation, cf. Proposition~\ref{prop:initialize} below, shows that}
we have $\|\nabla \cU^1\|_G \le c_G^{1/2} \|\nabla u^0\|$ so that $\dot{u}^n \to 0$ as
$n \to \infty$. For the constraint violation we {\EDT show in Proposition~\ref{la:constr_vio-mod}
that for $N \ge 2$ we have}
\[{\EDT
\|  |u^N|^2 - 1 \|_{L^1} =\frac 32 \Big (1-\frac 1{3^N}\Big )\tau^2 \|d_tu^1\|^2
+\frac 32\tau^4\sum_{n=2}^N\Big (1-\frac 1{3^{N+1-n}}\Big ) \|d_t^2 u^n\|^2.}
\]

The right-hand side is always of order $O(\tau)$. {\EDT Moreover, if and only if a discrete regularity
property applies, i.e., if and only if $d_t u^1$ belongs to $L^2(\O)$ and the piecewise constant interpolant
of the sequence $\tau^{1/2}d_t^2 u^n$  belongs to $L^2(0,T;L^2(\O))$ uniformly as $\tau \to 0$, then the right-hand
side is of order $O(\tau^2)$.}
All of our results are stated for a semi-discrete
method but hold verbatim if a spatial discretization with a nodal treatment of
the (linearized) constraint is considered.

The article is organized as follows. We specify our notation and collect some
auxiliary results in Section~\ref{sec:prelim}. In Section~\ref{sec:main} we
derive our main result. The application to the computation of
harmonic maps and bending isometries is reported in Section~\ref{sec:exp}.
We remark that other approaches based on higher order time stepping methods for
partial differential equations such as the Landau--Lifshitz--Gilbert equation
typically employ a suitable projection step or make use of constraint-preserving
reformulations; cf.~\cite{BarPro07,AKST14,AnGaSu21,FPPRS20,GuLiWa22,MPPR22}.


\section{Auxiliary results}\label{sec:prelim}
We use standard notation for differential operators and Lebesgue and Sobolev spaces, i.e.,
$H^1_D(\O;\R^\ell)$ denotes the space of vector fields $u:\O\to \R^\ell$ in $L^2(\O;\R^\ell)$
whose weak gradients are square integrable and whose traces vanish on $\G_D\subset \p\O$.
We let $|\cdot|$ denote the Euclidean length of a vector or the Frobenius norm of a matrix
and $\|\cdot \|$ the $L^2$ norm of a function or vector field.

\subsection{Discrete time derivatives}
We always let $\tau>0$ denote a time-step size which gives rise to the backward
difference operator
\[
d_t u^n = \frac{1}{\tau} (u^n - u^{n-1})
\]
for $n=1,2,\dots,N$ and a sequence $(u^n)$ in a Hilbert space. We also make use of
a second discrete time derivative, defined for $n\ge 2$ by
\[
d_t^2 u^n = \frac{1}{\tau^2} (u^n - 2 u^{n-1} + u^{n-2}).
\]
A binomial formula shows that we have
\[
(d_t u^n, u^n) = \frac{d_t}{2} \|u^n\|^2 + \frac{\tau}{2} \|d_t u^n\|^2.
\]
Approximations of time derivatives with higher accuracy can be obtained by a
Lagrange interpolation of $k+1$ successive members of a sequence $(u^n)$ corresponding
to time levels $(t_n)$ and a
subsequent evaluation of the derivative of the interpolation polynomial at $t_n$.
This leads to {\em backward differentiation formulas} and if three successive
values $u^n, u^{n-1}, u^{n-2}$ are used, i.e., $k=2$, provides
the discrete time derivative
\[
\dot{u}^n = \frac{1}{2\tau} \big(3 u^n - 4 u^{n-1} + u^{n-2}\big).
\]
The discrete time derivatives $d_t u^n$ and $\dot{u}^n$ define equivalent
$\ell^2$ seminorms in the sense of the following lemma.

\begin{lemma}[Norm equivalence]\label{la:l2_norm_eq}
For every sequence $(u^n)$ and $N\ge 2$ we have for the seminorms
\[
|(u^n)|_{\tau,1}  = \Big (\tau \sum_{n=2}^N  \|\dot{u}^n\|^2+ \tau \|d_t u^1\|^2\Big )^{1/2},
\quad |(u^n)|_{\tau,2}  = \Big (\tau \sum_{n=1}^N  \|d_t u^n\|^2\Big )^{1/2},
\]
that $c_{12}^{-1} |(u^n)|_{\tau,1} \le |(u^n)|_{\tau,2} \le c_{12} |(u^n)|_{\tau,1}$
with $c_{12}\ge 1$.
\end{lemma}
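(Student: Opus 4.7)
The plan is to exploit the algebraic identity $\dot u^n=\tfrac32 d_tu^n-\tfrac12 d_tu^{n-1}$, which follows directly from writing $3u^n-4u^{n-1}+u^{n-2}=3(u^n-u^{n-1})-(u^{n-1}-u^{n-2})=3\tau d_tu^n-\tau d_tu^{n-1}$. This identity expresses each seminorm in terms of the other and reduces the lemma to elementary estimates on a linear two-term recursion.

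For the upper bound $|(u^n)|_{\tau,1}\le c_{12}|(u^n)|_{\tau,2}$, the plan is to apply the triangle inequality together with $(a+b)^2\le 2(a^2+b^2)$ to the identity above, obtaining $\|\dot u^n\|^2\le \tfrac92\|d_tu^n\|^2+\tfrac12\|d_tu^{n-1}\|^2$, and then sum over $n=2,\dots,N$. After adding the boundary term $\tau\|d_tu^1\|^2$, both sums collapse into a multiple of $\tau\sum_{n=1}^N\|d_tu^n\|^2$. This direction is the easy one.

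For the reverse bound $|(u^n)|_{\tau,2}\le c_{12}|(u^n)|_{\tau,1}$, the plan is to solve the recursion $d_tu^n=\tfrac23\dot u^n+\tfrac13 d_tu^{n-1}$ explicitly, yielding
\[
d_tu^n=\tfrac23\sum_{j=2}^n 3^{j-n}\dot u^j+3^{1-n}d_tu^1
\]
for $n\ge 2$. Applying the Cauchy--Schwarz inequality with the geometric weights $3^{j-n}$ (whose sum $\sum_{j=2}^n 3^{j-n}$ is bounded by $3/2$ uniformly in $n$) produces
\[
\|d_tu^n\|^2\le C\sum_{j=2}^n 3^{j-n}\|\dot u^j\|^2+C\,3^{2(1-n)}\|d_tu^1\|^2.
\]
Summing this estimate over $n$, swapping the order of summation and using $\sum_{n=j}^N 3^{j-n}\le 3/2$ then bounds $\tau\sum_{n=1}^N\|d_tu^n\|^2$ by a constant multiple of $\tau\sum_{n=2}^N\|\dot u^n\|^2+\tau\|d_tu^1\|^2$. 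Taking $c_{12}$ to be the larger of the two constants (and at least $1$) concludes the proof.

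I do not expect a serious obstacle here; the only point requiring minor care is that the sum on the right-hand side of $|(u^n)|_{\tau,1}^2$ starts at $n=2$, so the contribution of $d_tu^1$ must be tracked separately in both directions, but the added term $\tau\|d_tu^1\|^2$ in the definition of $|\cdot|_{\tau,1}$ exactly absorbs this.
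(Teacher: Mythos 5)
Your proof is correct, and for the harder direction it takes a somewhat different route than the paper. The easy direction is essentially the paper's: both start from the identity $2\dot u^n = 3 d_t u^n - d_t u^{n-1}$ and use elementary quadratic estimates. For the bound $|(u^n)|_{\tau,2}\le c_{12}|(u^n)|_{\tau,1}$, the paper sums the one-step inequality $\|d_t u^n\|^2\le \tfrac89\|\dot u^n\|^2+\tfrac29\|d_t u^{n-1}\|^2$ over $n=2,\dots,N$ and absorbs the shifted sum on the right into the left-hand side (possible because the coefficient $\tfrac29<1$), leaving only $\|d_t u^1\|^2$ as a remainder. You instead unroll the recursion $d_t u^n=\tfrac23\dot u^n+\tfrac13 d_t u^{n-1}$ explicitly, which gives the representation $d_t u^n=\tfrac23\sum_{j=2}^n 3^{j-n}\dot u^j+3^{1-n}d_t u^1$, and then apply weighted Cauchy--Schwarz with the geometric weights followed by an exchange of the order of summation; all steps check out (the weight sums are bounded by $\tfrac32$ uniformly in $n$, and the initialization term carries a summable geometric factor). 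Both arguments rest on the same stability fact, namely that the recursion coefficient $\tfrac13$ is strictly smaller than one: the paper's absorption argument is shorter, while your explicit discrete Duhamel-type formula yields explicit constants and makes visible the geometric damping of the influence of $d_t u^1$, which is the same mechanism behind the factors $3^{-n}$ appearing in the constraint-violation identity of Proposition~\ref{la:constr_vio-mod}.
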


\begin{proof}
The relation $2 \dot{u}^n= 3d_t u^n-d_t u^{n-1}$ immediately leads to the first
estimate. It also implies the second estimate since
\[
\|d_t u^n\|^2 \le \Big(\frac23 \|\dot{u}^n\| + \frac13 \|d_t u^{n-1}\|\Big)^2
\le \frac89 \|\dot{u}^n\|^2 + \frac29 \|d_t u^{n-1}\|^2.
\]
Summing over $n=2,3,\dots,N$ and absorbing the second sum
on the right-hand side except for $\|d_t u^1\|^2$ implies the estimate.
\end{proof}

We also state an inverse estimate for discrete seminorms.

\begin{lemma}[Inverse estimate]\label{la:inv_est}
For every sequence $(u^n)$ and $N\ge 2$ we have
\[
\Big(\tau \sum_{n=2}^N \|d_t^2 u^n\|^2\Big)^{1/2}
\le \tau^{-1} c_{{\rm inv}} \Big (\tau \sum_{n=2}^N  \|\dot{u}^n\|^2+ \tau \|d_t u^1\|^2\Big )^{1/2}.
\]
\end{lemma}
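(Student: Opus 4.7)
The plan is to reduce the estimate to the previously-established norm equivalence (Lemma~\ref{la:l2_norm_eq}) by expressing the second discrete time derivative $d_t^2 u^n$ as a difference of first discrete time derivatives. Specifically, I would first observe that
\[
d_t^2 u^n = \frac{1}{\tau}\big(d_t u^n - d_t u^{n-1}\big),
\]
so that by the triangle inequality and the elementary inequality $(a+b)^2 \le 2(a^2+b^2)$,
\[
\|d_t^2 u^n\|^2 \le \frac{2}{\tau^2}\big(\|d_t u^n\|^2 + \|d_t u^{n-1}\|^2\big).
\]

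Next I would multiply by $\tau$, sum over $n=2,\dots,N$, and re-index the shifted sum to obtain
\[
\tau \sum_{n=2}^N \|d_t^2 u^n\|^2 \le \frac{4}{\tau^2}\, \tau \sum_{n=1}^N \|d_t u^n\|^2 = \frac{4}{\tau^2}\, |(u^n)|_{\tau,2}^2.
\]
Finally, I would invoke the upper bound $|(u^n)|_{\tau,2} \le c_{12} |(u^n)|_{\tau,1}$ from Lemma~\ref{la:l2_norm_eq} to conclude
\[
\tau \sum_{n=2}^N \|d_t^2 u^n\|^2 \le \frac{4 c_{12}^2}{\tau^2} \Big(\tau \sum_{n=2}^N \|\dot{u}^n\|^2 + \tau \|d_t u^1\|^2\Big),
\]
so that taking square roots yields the claim with $c_{\rm inv} = 2 c_{12}$.

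There is no real obstacle here: the lemma is essentially a two-step consequence of the triangle inequality and the already-proved norm equivalence. The only point worth being careful about is the re-indexing of the shifted sum $\sum_{n=2}^N \|d_t u^{n-1}\|^2 = \sum_{n=1}^{N-1} \|d_t u^n\|^2$, which must not exceed $\sum_{n=1}^N \|d_t u^n\|^2$, and tracking the constant $c_{\rm inv}$ so that it is expressed in terms of the constant $c_{12}$ from the preceding lemma.
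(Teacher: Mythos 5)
Your proof is correct and takes essentially the same route as the paper: the paper starts from the identity $\tau d_t^2 u^n = 2(\dot{u}^n - d_t u^n)$ whereas you use $\tau d_t^2 u^n = d_t u^n - d_t u^{n-1}$, but in both cases the bound reduces, after the triangle inequality and summation, to the norm equivalence of Lemma~\ref{la:l2_norm_eq}. Your re-indexing of the shifted sum and the resulting constant $c_{\rm inv} = 2 c_{12}$ are handled correctly.
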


\begin{proof}
Noting that $2(\dot{u}^n- d_t u^n) =  \tau d_t^2 u^n$ yields that
\[
\tau \|d_t^2 u^n\| \le 2 \big(\|\dot{u}^n\| + \|d_t u^n\|\big).
\]
Taking squares, summing over $n=2,3,\dots,N$,
and incorporating Lemma~\ref{la:l2_norm_eq} proves the estimate.
\end{proof}

\subsection{BDF-adapted norm}\label{subsec:bdf2norm}
The definition of $\dot{u}^n$ leads to the multistep scheme $\dot{y}^n = f(t_n,y^n)$
which has a second order consistency property and is referred to as a {\em BDF2 scheme}.
It satisfies an energy stability property which is a consequence of the identity, cf.~\cite[p.~308]{HaiWan96},
\begin{equation}\label{eq:g_binom}
\dot{u}^n \cdot u^n =  d_t |\cU^n|_G^2 + \frac{\tau^3}{4} |d_t^2 u^n|^2,
\end{equation}
where $\cU^n= (u^n,u^{n-1})$ for $n\ge 1$ and for an arbitrary pair $\cX=(x,y)$
of elements $x,y$ from an inner product space we set
\[
|\cX |_G^2 = (G\cX) \cdot \cX = g_{11} |x|^2 + 2 g_{12} x\cdot y + g_{22}|y|^2,
\]
with $g_{11} = 5/4$, $g_{12}=-1/2$ and $g_{22} = 1/4$. The positive eigenvalues
$\l_\pm = (3\pm 2 \sqrt{2})/4$ of the symmetric matrix $G=(g_{ij})$ yield the equivalence
\[
\l_- (|x|^2 + |y|^2) \le |(x,y)|_G^2 \le \l_+ (|x|^2 + |y|^2).
\]
Moreover, we have $|(x,y)|_G^2 - \frac14\big( |x|^2+ |y|^2\big) =  x \cdot (x-y)$, and
\begin{equation}\label{eq:quad_rel}
|(x,y)|_G^2 - \frac12 |x-y|^2  = \frac34 |x|^2 - \frac14 |y|^2.
\end{equation}
{\EDT The following lemma provides a discrete version of
the identity $(|v|^2)' = 2 v'\cdot v$. Recall that we have
$\hv^n= v^{n-1} + \tau d_t v^{n-1}$.

\begin{lemma}[Discrete chain rule]\label{Le:orthog-time-deriv}
For a sequence $(v^n)$ and $n \ge 2$ we have
\[
2 \dot{v}^n \cdot \hv^n= \frac{1}{2 \tau}
\Big [3|v^n|^2 -4  |v^{n-1}|^2+  |v^{n-2}|^2\Big ]-\frac32 \tau^3 |d_t^2 v^n|^2.
\]
\end{lemma}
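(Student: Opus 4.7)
The identity claimed in the lemma is the discrete analogue of the continuous chain rule $2 v' \cdot v = (|v|^2)'$. Observe that the right-hand side, namely $(2\tau)^{-1}[3|v^n|^2 - 4|v^{n-1}|^2 + |v^{n-2}|^2]$, is exactly the BDF2 discrete time derivative of the scalar sequence $s^n := |v^n|^2$, and that $\hv^n$ is a second-order accurate extrapolation of $v^n$, since $v^n - \hv^n = \tau^2 d_t^2 v^n$ by the definition $\hv^n = 2v^{n-1} - v^{n-2}$. Hence the discrepancy between the ``ideal'' $2 \dot{v}^n \cdot v^n$ and what actually appears, namely $2 \dot{v}^n \cdot \hv^n$, is of order $\tau^4$, and the lemma tracks this discrepancy exactly.

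The plan is to reduce the claim to the purely algebraic identity
\[
3|a|^2 - 4|b|^2 + |c|^2 = 2(3a - 4b + c) \cdot (2b - c) + 3|a - 2b + c|^2,
\]
valid for any three elements $a,b,c$ of an inner product space. Setting $a = v^n$, $b = v^{n-1}$, $c = v^{n-2}$, dividing by $2\tau$, and using $\tau^2 d_t^2 v^n = v^n - 2v^{n-1} + v^{n-2}$ together with the definitions of $\dot{v}^n$ and $\hv^n$ translates the three terms of this identity into $\dot{s}^n$, $2\dot{v}^n \cdot \hv^n$, and $(3/2)\tau^3|d_t^2 v^n|^2$ respectively, which rearranged is precisely the claim.

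The algebraic identity itself is established by straightforward bilinear expansion of both sides and comparison of the coefficients of the six monomials $|a|^2$, $|b|^2$, $|c|^2$, $a \cdot b$, $a \cdot c$, $b \cdot c$; by polarization it suffices to check the scalar case $a,b,c \in \R$, which is a mechanical computation.

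I do not expect any genuine obstacle here. The one item that must be guessed correctly in advance is the constant $3/2$ in front of $\tau^3 |d_t^2 v^n|^2$: this is the unique value for which the three off-diagonal products $v^n \cdot v^{n-1}$, $v^n \cdot v^{n-2}$, $v^{n-1} \cdot v^{n-2}$ arising in the expansion of $3|v^n - 2v^{n-1} + v^{n-2}|^2$ cancel exactly those arising in $2(3v^n - 4v^{n-1} + v^{n-2}) \cdot (2v^{n-1} - v^{n-2})$, leaving only the desired pure squared norms on the right-hand side.
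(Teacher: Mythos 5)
Your identity and its verification are correct: expanding $2(3a-4b+c)\cdot(2b-c)+3|a-2b+c|^2$ and comparing coefficients of $|a|^2,|b|^2,|c|^2,a\cdot b,a\cdot c,b\cdot c$ indeed gives $3|a|^2-4|b|^2+|c|^2$, and substituting $a=v^n$, $b=v^{n-1}$, $c=v^{n-2}$, dividing by $2\tau$ and using $2\tau\dot{v}^n=3v^n-4v^{n-1}+v^{n-2}$, $\hv^n=2v^{n-1}-v^{n-2}$, $\tau^2 d_t^2v^n=v^n-2v^{n-1}+v^{n-2}$ yields exactly the asserted formula. This is, however, a genuinely different route from the paper. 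The paper splits $2\dot{v}^n\cdot\hv^n=2\dot{v}^n\cdot v^n-2\dot{v}^n\cdot(v^n-\hv^n)$, treats the first term with the Hairer--Wanner $G$-matrix identity~\eqref{eq:g_binom}, the second with $2\dot{v}^n=3d_tv^n-d_tv^{n-1}$, $v^n-\hv^n=\tau(d_tv^n-d_tv^{n-1})$ and the scalar identity $(3a-b)(a-b)=(a^2-b^2)+2(a-b)^2$, and then eliminates the resulting $|\cdot|_G$-terms via~\eqref{eq:quad_rel}. Your proof is more elementary and entirely self-contained (it needs none of the $G$-norm machinery and delivers the constant $3/2$ by direct coefficient matching), whereas the paper's decomposition makes explicit how the lemma sits on top of the BDF2 energy identity that is reused in Proposition~\ref{prop:ener_stab}, i.e., it exhibits the telescoping $d_t$-structure $d_t\big(2|\cV^n|_G^2-\tau^2|d_tv^n|^2\big)$ behind the right-hand side rather than just the final collapsed form. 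Two cosmetic remarks: your heuristic claim that the discrepancy between $2\dot{v}^n\cdot v^n$ and $2\dot{v}^n\cdot\hv^n$ is of order $\tau^4$ is not accurate (for smooth sequences $v^n-\hv^n=\tau^2d_t^2v^n$ makes it $O(\tau^2)$ generically), but this is purely motivational and does not affect the argument; and the appeal to ``polarization'' is fine, since a vanishing symmetric quadratic identity checked for real scalars forces all coefficients to vanish and hence holds in any inner product space.
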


\begin{proof}
We start by splitting the left-hand side of the asserted identity as
\[
2 \dot{v}^n \cdot \hv^n= 2 \dot{v}^n \cdot v^n- 2 \dot{v}^n \cdot (v^n-\hv^n).
\]
We apply~\eqref{eq:g_binom} to the first term on the right-hand side. For the
second term we note that $2\dot{v}^n=3d_tv^n-d_tv^{n-1}$ and
$v^n-\hv^n=\tau (d_tv^n-d_tv^{n-1})$, and use the binomial formula
$(3a-b)(a-b) = (a^2-b^2) + 2 (a-b)^2$, i.e.,
\[\begin{split}
2 \dot{v}^n \cdot (v^n-\hv^n)&=  \tau (3d_tv^n-d_tv^{n-1})\cdot (d_tv^n-d_tv^{n-1})\\
&= \tau \big(|d_tv^n|^2-|d_tv^{n-1}|^2\big) + 2\tau^3 |d_t^2 v^n|^2.
\end{split}\]
On combining the identities we deduce with $\cV^n=(v^n,v^{n-1})$ that
\[
2 \dot{v}^n \cdot \hv^n = d_t \big({\EDT 2} |\cV^n|_G^2 - \tau^2 |d_tv^n|^2\big) - \frac32 \tau^3 |d_t^2 v^n|^2.
\]
Incorporating~\eqref{eq:quad_rel} yields the asserted identity.
\end{proof}

\begin{remark}
If $\dot{u}^n\cdot \hu^n = 0$, then we deduce for $n\ge 2$ that
\begin{equation}\label{eq:orthog-time-deriv2}
\frac 32 |u^n|^2 -2  |u^{n-1}|^2+ \frac12  |u^{n-2}|^2=\frac 32 \tau^4 |d_t^2 u^n|^2
\end{equation}
If $|u_\star^n|^2=1$ for all $n\ge 0$, then we have
$2 \dot{u}_\star^n \cdot \hu_\star^n=-\frac 32\tau^3 |d_t^2 u_\star^n|^2$ for $n\ge 2$.
\end{remark}
}

\section{Main result}\label{sec:main}
We provide in this section the derivation of the identities and estimates for the
energy stability and constraint violation. We always denote a pair of subsequent
approximations for $n\ge 1$ via
\[
\cU^n = (u^n,u^{n-1})
\]
with the iterates $(u^n)_{n=0,\dots}$ obtained with Algorithm~\ref{alg:bdf2_iter}.
Throughout the following we assume that the norm induced by the inner product $(\cdot,\cdot)_\star$
controls the $L^2$ norm, i.e., that
\[
\|v\| \le c_\star^{1/2} \|v\|_\star
\]
for all $v\in H^1_D(\O;\R^\ell)$. The first result concerns the initialization step.

\begin{proposition}[Initialization]\label{prop:initialize}
(a) We have
\[
\|\nabla \cU^1\|_G^2 \le c_G \|\nabla u^0\|^2, \quad \tau \|d_t u^1\|_\star^2 \le \frac12 \|\nabla u^0\|^2.
\]
(b) We have
\[
{\EDT \big\| |u^1|^2 - 1 \big\|_{L^1} =  \tau^2 \|d_t u^1\|^2. }
\]
\end{proposition}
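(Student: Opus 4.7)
The plan is to handle (a) and (b) essentially independently, both drawing directly on the defining relations of Step~(0) of Algorithm~\ref{alg:bdf2_iter} together with the orthogonality constraint $d_t u^1 \cdot u^0 = 0$.

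For part~(a), I would test the variational equation in Step~(0) with the admissible choice $v = d_t u^1$; this is legal precisely because $d_t u^1 \cdot u^0 = 0$ by construction. This yields
\[
\|d_t u^1\|_\star^2 + (\nabla u^0,\nabla d_t u^1) + \tau \|\nabla d_t u^1\|^2 = 0.
\]
Rewriting $(\nabla u^0,\nabla d_t u^1) = \tau^{-1} (\nabla u^0,\nabla u^1 - \nabla u^0)$ and applying the elementary polarization identity $(b,a-b) = \tfrac12(|a|^2 - |b|^2 - |a-b|^2)$, I expect to arrive at the energy identity
\[
\|\nabla u^1\|^2 + 2\tau \|d_t u^1\|_\star^2 + \|\nabla u^1 - \nabla u^0\|^2 = \|\nabla u^0\|^2.
\]
The second bound in (a) then follows by dropping the nonnegative third term, and additionally one reads off $\|\nabla u^1\|^2 \le \|\nabla u^0\|^2$. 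The first bound in (a) is obtained from the eigenvalue estimate $|\cdot|_G^2 \le \lambda_+(|\cdot|^2 + |\cdot|^2)$ recorded in Section~\ref{subsec:bdf2norm}, which together with $\|\nabla u^1\|^2 \le \|\nabla u^0\|^2$ gives $\|\nabla \cU^1\|_G^2 \le 2\lambda_+ \|\nabla u^0\|^2$, so one may take $c_G = 2\lambda_+ = (3+2\sqrt 2)/2$.

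For part~(b), I would simply expand the squared length pointwise. Since $u^1 = u^0 + \tau d_t u^1$, we have
\[
|u^1|^2 = |u^0|^2 + 2\tau\, u^0\cdot d_t u^1 + \tau^2 |d_t u^1|^2.
\]
The linear term vanishes by the pointwise orthogonality $d_t u^1 \cdot u^0 = 0$, and $|u^0|^2 = 1$ almost everywhere by the choice of $u^0$. Therefore $|u^1|^2 - 1 = \tau^2 |d_t u^1|^2 \ge 0$ pointwise, and integrating over $\O$ turns the $L^1$ norm into an ordinary integral of a nonnegative function, giving the claimed equality.

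There is no real obstacle in this proof; the only point requiring any care is the admissibility of $v = d_t u^1$ as a test function, which is exactly what the imposed orthogonality $d_t u^1 \cdot u^0 = 0$ ensures. Everything else reduces to one polarization identity and one pointwise expansion.
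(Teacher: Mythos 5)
Your proof is correct and follows essentially the same route as the paper: testing Step~(0) with $v=d_t u^1$ to obtain the discrete energy identity (yours is the paper's identity multiplied by two, since $\|\nabla u^1-\nabla u^0\|^2=\tau^2\|\nabla d_t u^1\|^2$), deducing both bounds in (a) from it, and proving (b) by the pointwise expansion of $|u^0+\tau d_t u^1|^2$ together with the orthogonality and $|u^0|^2=1$. The only difference is that you make the admissible constant explicit ($c_G=2\lambda_+$), which the paper leaves unspecified; this is fine.
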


\begin{proof}
(a) Choosing $v=d_t u^1$ in Step~(0) of Algorithm~\ref{alg:bdf2_iter} shows that we
have
\[
\frac12 \|\nabla u^1\|^2 + \tau \|d_t u^1\|_\star^2 + \frac{\tau^2}{2} \|\nabla d_t u^1\|^2
= \frac12 \|\nabla u^0\|^2,
\]
which implies the bounds for $\|\nabla \cU^1\|_G^2$ and $\tau \|d_t u^1\|_\star^2$. \\
(b) Since $d_t u^1 \cdot u^0 = 0$ in Step~(0) of Algorithm~\ref{alg:bdf2_iter},
we have that $|u^1|^2 = |u^0|^2 + \tau^2 |d_t u^1|^2$. Noting $|u^0|^2 = 1$ shows the identity.
\end{proof}

The second result implies that the iteration is energy decreasing and that it becomes
stationary for $n\to \infty$.

\begin{proposition}[Energy decay]\label{prop:ener_stab}
For every $N\ge 1$ we have
\[
\|\nabla \cU^N\|_G^2  + \tau  \sum_{n=2}^N \|\dot{u}^n\|_\star^2
+  \frac{\tau^4}{4} \sum_{n=2}^N \|d_t^2 \nabla u^n\|^2
=  \|\nabla \cU^1\|_G^2.
\]
\end{proposition}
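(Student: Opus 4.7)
The natural plan is to test the variational equation in Step~(1) of Algorithm~\ref{alg:bdf2_iter} with $v = \dot{u}^n$. Since $\dot{u}^n \in H^1_D(\O;\R^\ell)$ and satisfies $\dot{u}^n \cdot \hu^n = 0$ by construction, this is an admissible test function. The equation in Step~(1) is equivalent to
\[
(\dot{u}^n, v)_\star + (\nabla u^n,\nabla v) = 0,
\]
since $u^n = (4 u^{n-1} - u^{n-2} + 2\tau \dot{u}^n)/3$. Hence, the choice $v = \dot{u}^n$ yields
\[
\|\dot{u}^n\|_\star^2 + (\nabla \dot{u}^n, \nabla u^n) = 0.
\]

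Next, I would apply the BDF2 binomial identity~\eqref{eq:g_binom} componentwise to the gradient. The pointwise relation $\dot{w}^n \cdot w^n = d_t |\cW^n|_G^2 + \tfrac{\tau^3}{4} |d_t^2 w^n|^2$, used with $w^n$ replaced by the matrix-valued field $\nabla u^n$ (so that the dot product becomes the Frobenius inner product and $\cW^n = (\nabla u^n, \nabla u^{n-1})$), gives after integration over $\O$ that
\[
(\nabla \dot{u}^n, \nabla u^n) = d_t \|\nabla \cU^n\|_G^2 + \frac{\tau^3}{4}\|d_t^2 \nabla u^n\|^2.
\]
Substituting into the previous identity and multiplying by $\tau$ leads to the per-step identity
\[
\tau \|\dot{u}^n\|_\star^2 + \|\nabla \cU^n\|_G^2 - \|\nabla \cU^{n-1}\|_G^2 + \frac{\tau^4}{4}\|d_t^2 \nabla u^n\|^2 = 0.
\]

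Finally, I would sum this identity from $n=2$ to $n=N$. The discrete-derivative term telescopes to $\|\nabla \cU^N\|_G^2 - \|\nabla \cU^1\|_G^2$, and rearranging yields the asserted equality. The only subtle point, and the main place where care is needed, is the admissibility of $v = \dot{u}^n$ as a test function; this is precisely guaranteed by the fact that $\dot{u}^n$ is constructed in Step~(1) to satisfy $\dot{u}^n \cdot \hu^n = 0$ together with the homogeneous boundary condition on $\GD$. Everything else reduces to a direct application of~\eqref{eq:g_binom} to the gradient variable and telescoping.
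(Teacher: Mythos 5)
Your proof is correct and follows exactly the paper's argument: test Step~(1) with $v=\dot{u}^n$, apply the BDF2 identity~\eqref{eq:g_binom} to the gradient variable, multiply by $\tau$, and telescope the sum over $n=2,\dots,N$. The admissibility remark and the rewriting of the explicit combination as $(\nabla u^n,\nabla v)$ are exactly the (implicit) ingredients of the paper's one-line proof, so there is nothing to add.
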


\begin{proof}
Choosing $v=\dot{u}^n$ in Step~(1) of Algorithm~\ref{alg:bdf2_iter} yields,
using~\eqref{eq:g_binom}, that
\[
\tau \|\dot{u}^n\|_\star^2 +  \|\nabla \cU^n\|_G^2 - \|\nabla \cU^{n-1}\|_G^2
+ \frac{\tau^4}{4} \|d_t^2 \nabla u^n\|^2 = 0.
\]
A summation over $n=2,3,\dots,N$ leads to the asserted identity.
\end{proof}

\begin{remark}
For the extrapolated value $\hu^{n+1/2} = (3 u^n - u^{n-1})/2$ we have
\[
\frac12 \|\nabla \hu^{n+1/2} \|^2 + \frac18 \tau^2 \|d_t u^n\|^2 = \|\nabla \cU^n\|_G^2,
\]
which yields another version of the energy law and shows that the BDF2 method
has a stabilizing effect.
\end{remark}

We next derive constraint violation estimates which provide an unconditional linear rate
and a quadratic error under a mild {\EDT but necessary}
discrete regularity condition. Qualitatively, the condition
requires that sequences of approximations are uniformly bounded in
$W^{1,\infty}(0,\d;L^2(\O)) \cap H^{3/2}(0,T;L^2(\O))$ for some $\d>0$.

{\EDT
\begin{proposition}[Constraint violation]\label{la:constr_vio-mod}
For every $n\ge 2,$ we have
\[\begin{split}
|u^n|^2 &=-\frac 12 \Big (1-\frac 1{3^{n-1}}\Big )|u^0|^2+\frac 32 \Big (1-\frac 1{3^n}\Big )|u^1|^2
+\frac 32\tau^4\sum_{i=2}^n\Big (1-\frac 1{3^{n+1-i}}\Big ) |d_t^2 u^i|^2.
\end{split}\]
If $|u^0|^2 = 1$, and $u^1$ is computed by the linearly implicit Euler method,
then $\big (|u^n|\big )$ is increasing almost everywhere in~$\O$ and we have
\begin{equation} \label{eq:constr-viol-new6}
\big \|  |u^n|^2 - 1 \big \|_{L^1} =\frac 32 \Big (1-\frac 1{3^n}\Big )\tau^2 \|d_tu^1\|^2
+\frac 32\tau^4\sum_{i=2}^n\Big (1-\frac 1{3^{n+1-i}}\Big ) \|d_t^2 u^i\|^2.
\end{equation}
(a) Unconditionally and uniformly in $n\ge 1$,~\eqref{eq:constr-viol-new6} is bounded by $c_1 \tau$. \\
(b) {\EDT If and only if} for $m\ge 1$ we have
\begin{equation}\label{eq:mild_reg}
\|d_t u^1\|^2  + \tau^2 \sum_{i=2}^{m}  \|d_t^2u^i\|^2 \le c_r,
\end{equation}
then~\eqref{eq:constr-viol-new6} is bounded by $c_2 \tau^2$ for every $n=1,2,\dots,m$,
as $\tau \to 0$.
\end{proposition}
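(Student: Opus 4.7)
The starting point is the pointwise identity~\eqref{eq:orthog-time-deriv2}, which is a consequence of the constraint $\dot{u}^n \cdot \hu^n = 0$ together with the discrete chain rule of Lemma~\ref{Le:orthog-time-deriv}. Rewriting~\eqref{eq:orthog-time-deriv2} as
\[
|u^n|^2 = \frac{4}{3}|u^{n-1}|^2 - \frac{1}{3}|u^{n-2}|^2 + \tau^4 |d_t^2 u^n|^2,
\]
I read it as an inhomogeneous linear recurrence in the pointwise scalar sequence $a_n := |u^n(x)|^2$ with forcing $f_n := \tau^4|d_t^2 u^n(x)|^2$. The characteristic polynomial $x^2 - \tfrac{4}{3}x + \tfrac{1}{3}$ factors with roots $1$ and $1/3$, so the homogeneous solution determined by $a_0=|u^0|^2$, $a_1=|u^1|^2$ is
\[
a_n^{\text{hom}} = -\tfrac{1}{2}\bigl(1 - 3^{-(n-1)}\bigr)|u^0|^2 + \tfrac{3}{2}\bigl(1 - 3^{-n}\bigr)|u^1|^2,
\]
as one verifies by plugging $n=0$ and $n=1$. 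For the particular solution with zero initial data I compute the Green's function $h_k$ defined by $h_k - \tfrac{4}{3}h_{k-1} + \tfrac{1}{3}h_{k-2}=0$ for $k\ge 1$ with $h_0=1$, $h_{-1}=0$; solving yields $h_k = \tfrac{3}{2} - \tfrac{1}{2}\cdot 3^{-k} = \tfrac{3}{2}(1-3^{-(k+1)})$. Convolving with the forcing gives the claimed sum $\tfrac{3}{2}\tau^4\sum_{i=2}^n(1-3^{-(n+1-i)})|d_t^2 u^i|^2$, and adding the homogeneous part proves the first displayed identity.

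To pass to the $L^1$ statement, I substitute $|u^0|^2=1$ and use Proposition~\ref{prop:initialize}(b), which supplies $|u^1|^2 = 1 + \tau^2|d_t u^1|^2$ pointwise. After elementary cancellation the constant terms collapse to $1$, leaving exactly
\[
|u^n|^2 - 1 = \tfrac{3}{2}\bigl(1-3^{-n}\bigr)\tau^2|d_t u^1|^2 + \tfrac{3}{2}\tau^4\sum_{i=2}^n\bigl(1-3^{-(n+1-i)}\bigr)|d_t^2 u^i|^2,
\]
which is pointwise nonnegative and, term by term, monotonically increasing in $n$ (the factors $1-3^{-n}$ and $1-3^{-(n+1-i)}$ are monotone in $n$ and a fresh nonnegative summand is added at each step). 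Integrating over $\O$ yields~\eqref{eq:constr-viol-new6}, and since $|u^n|^2\ge |u^{n-1}|^2$ a.e.\ we obtain the monotonicity of $|u^n|$.

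For the bounds, I use the obvious two-sided inequality $\tfrac{2}{3}\le 1-3^{-k}\le 1$ for $k\ge 1$, so the right-hand side of~\eqref{eq:constr-viol-new6} is equivalent to $\tau^2\|d_t u^1\|^2 + \tau^4\sum_{i=2}^n\|d_t^2 u^i\|^2$, up to universal constants. Part~(a) then follows by combining Proposition~\ref{prop:initialize}(a) (which gives $\tau\|d_t u^1\|_\star^2\le \tfrac{1}{2}\|\nabla u^0\|^2$, hence $\tau^2\|d_t u^1\|^2 \le c_\star\tau\|\nabla u^0\|^2/2 = O(\tau)$) with the inverse estimate $\tau\|d_t^2 u^n\|\le 2(\|\dot u^n\|+\|d_t u^n\|)$ from the proof of Lemma~\ref{la:inv_est}, the norm equivalence of Lemma~\ref{la:l2_norm_eq}, and the energy estimate of Proposition~\ref{prop:ener_stab}, which together yield $\tau^3\sum\|d_t^2 u^n\|^2 \le C$ and therefore $\tau^4\sum\|d_t^2 u^n\|^2 = O(\tau)$. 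For part~(b), the ``if'' direction is immediate by multiplying the regularity bound~\eqref{eq:mild_reg} by $\tau^2$, while the ``only if'' follows from the lower bound $1-3^{-k}\ge 2/3$ applied to~\eqref{eq:constr-viol-new6}, which lets me divide through by $\tau^2$ and recover~\eqref{eq:mild_reg} with a constant proportional to $c_2$. The main technical hurdle is the explicit inversion of the recurrence—once the Green's function is identified, everything else is bookkeeping.
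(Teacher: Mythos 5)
Your proposal is correct and follows essentially the same route as the paper: both start from the identity~\eqref{eq:orthog-time-deriv2} obtained from $\dot{u}^n\cdot\hu^n=0$ via Lemma~\ref{Le:orthog-time-deriv}, solve the resulting constant-coefficient difference equation with roots $1$ and $3$ explicitly, and then deduce (a) from Propositions~\ref{prop:initialize} and~\ref{prop:ener_stab} together with Lemma~\ref{la:inv_est}, and (b) from the uniform upper and lower bounds on the coefficients. The only cosmetic difference is that you package the solution as homogeneous part plus a Green's-function convolution with $h_k=\tfrac32\bigl(1-3^{-(k+1)}\bigr)$, whereas the paper obtains the same coefficients $\gamma_j=1-3^{-(j+1)}$ from the power-series expansion of $1/\ha(z)$ and sums the shifted equations.
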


\def\ha{\widetilde{\alpha}}

\begin{proof}
The main idea is to interpret~\eqref{eq:orthog-time-deriv2} as
an inhomogeneous linear difference equation that implies the asserted identity.
The roots of the polynomial $\ha (z)=\frac 32 -2z+\frac 12 z^2$
are $z_1=1$ and $z_2=3,$ and thus the rational function $1/\ha$
is holomorphic in the open unit disk in the complex plane. A Taylor expansion about
the origin yields for $|z|<1$ that
\begin{equation}
\label{eq:expansion}
\frac 1{\ha(z)}
= \sum_{n=0}^\infty \gamma_n z^n.  
\end{equation}
Multiplying this identity by $\ha(z)$ and comparing coefficients leads to the values
$\g_0 = 2/3$ and $\g_1 = 8/9$, and the recursion formula
\[
\frac 32\gamma_n-2\gamma_{n-1}+\frac 12\gamma_{n-2}=0.
\]
Noting that $(\g_n)$ is given as a linear combination of the sequences
$z_1^{-n}=1$ and $z_2^{-n} = 3^{-n}$ shows that $\g_n = 1- 3^{-(n+1)}$, $n\ge 0$.
We next consider the difference equation \eqref{eq:orthog-time-deriv2} with $n$
replaced by $n-j, j=0,\dots,n-2,$ multiply the corresponding equations by
$\gamma_j,$ and sum over $j$ to derive the identity
\[ 
\sum_{j=0}^{n-2}\gamma_j\Big (  \frac 32 |u^{n-j}|^2 -2  |u^{n-j-1}|^2+ \frac12  |u^{n-j-2}|^2\Big )=
\frac 32\tau^4\sum_{j=0}^{n-2}\Big (1-\frac 1{3^{j+1}}\Big ) |d_t^2 u^{n-j}|^2.
\]
We re-arrange the left-hand side as
\[\begin{split}
& \sum_{j=2}^{n-2} \Big( \frac32 \g_j - 2 \g_{j-1}  + \frac12 \g_{j-2}\Big)  |u^{n-j}|^2 \\
&\quad + \frac32 \g_0 |u^n|^2 + \Big(\frac32 \g_1 - 2 \g_0\Big)|u^{n-1}|^2
- \Big(2\g_{n-2}-\frac12 \g_{n-3}\Big) |u^1|^2 + \frac12 \g_{n-2}|u^0|^2,
\end{split}\]
and use the identities for the coefficients to deduce the asserted identity for $|u^n|^2$,
which immediately leads to~\eqref{eq:constr-viol-new6} noting that $|u^1|^2 = |u^0|^2+ \tau^2 |d_tu^1|^2$. \\
(a) Proposition~\ref{prop:initialize} shows that $\tau \|d_t u^1\|^2$ is uniformly bounded.
The inverse estimate of Lemma~\ref{la:inv_est} in combination with the energy stability
established in Proposition~\ref{prop:ener_stab} thus proves the unconditional estimate. \\
(b) The assumed bound directly leads to the quadratic error estimate. It is optimal since
the coefficients in~\eqref{eq:constr-viol-new6} are uniformly bounded from below.
\end{proof}
}

\begin{remark}\label{rem:time_der}
(i) Choosing the test functions $v=d_tu^1$ and $v=\dot{u}^n$ in Steps~(0) and~(1)
of Algorithm~\ref{alg:bdf2_iter}, respectively, yields that
\[
\|d_t u^1\|_\star^2 \le \|\nabla u^1\| \| \nabla d_t u^1\|, \quad
\|\dot{u}^n\|_\star^2 \le \|\nabla u^n\| \| \nabla \dot{u}^n\|.
\]
Hence, if the norm $\|\cdot\|_\star$ controls the $H^1$ norm, we have that
$\|d_t u^1\|_{H^1}$ and $\|\dot{u}^n\|_{H^1}$, $n\ge 2$, are bounded by the initial energy.
Noting $2 \dot{u}^n = 3d_t u^n- d_tu^{n-1}$ then implies a uniform bound on {\EDT $\|d_t u^n\|_{H^1}$},
$n\ge 1$. \\
(ii) If the norm $\|\cdot\|_\star$ controls the $L^\infty$ norm, e.g., via suitable
Sobolev inequalities or inverse estimates in a spatially discrete setting, then a pointwise
bound for the constraint violation error can be deduced.
\end{remark}

\section{Experiments}\label{sec:exp}
We report in this section the performance of the devised method
used as an iterative procedure to determine stationary configurations
for the pointwise constrained Dirichlet energy and a nonlinear
bending functional. The algorithm devised and analyzed for approximating
harmonic maps into spheres can be greatly generalized and applies to the
numerical solution of a constrained minimization problem
\[
\text{Minimize} \quad I[u] = \frac12 a(u,u) - b(u), \quad u\in V,
\]
subject to boundary conditions $\ell_\bc (u) = u_D$ and a constraint
\[
G(u) = 0.
\]
Given some approximation $\hu \in V$ satisfying $\ell_\bc(\hu) = u_D$ we define
a corresponding linear space via
\[
\cF[\hu] = \{v\in V: \ell_\bc(v) = 0, \, g(\hu;v) = 0 \big\},
\]
where $g$ is the derivative of $G$. Our algorithm then reads as follows.

\begin{algorithm}\label{alg:bdf2_iter_gen}
Choose $u^0\in V$ with $\ell_\bc(u^0) = u_D$ and $G(u^0)=0$. \\
(0) Compute $d_t u^1 \in \cF[u^0]$ with
\[
(d_t u^1, v)_\star + a(u^0+\tau d_t u^1,v) = b(v)
\]
for all $v\in \cF[u^0]$; set $u^1 = u^0+\tau d_t u^1$ and $n=2$. \\
(1) Set $\hu^n = 2 u^{n-1} - u^{n-2}$ and
compute $\dot{u}^n \in \cF[\hu^n] = 0$ with
\[
(\dot{u}^n, v)_\star + \frac13 a(4 u^{n-1} - u^{n-2} + 2 \tau \dot{u}^n,v) = b(v)
\]
for all $v\in \cF[\hu^n]$; set
$u^n =(4 u^{n-1} - u^{n-2} + 2 \tau \dot{u}^n)/3$. \\
(2) Stop if $\|\dot{u}^n\|_\star + \|d_t u^n\|_\sharp \le \veps_{\rm stop}$ or $n\tau \ge T$. \\
(3) Increase $n \to n+1$ and continue with~(1).
\end{algorithm}

{\EDT Choosing $b=0$, $a(\cdot,\cdot) = (\nabla \cdot, \nabla \cdot)$ and $G(u) = |u|^2 -1$ yields the setting of harmonic maps, whereas choosing $a(\cdot,\cdot) = (D^2 \cdot, D^2 \cdot)$
and $G(u)$ to realize an isometry constraint yields the setting of bending isometries
considered below in Section~\ref{subsec:bending_exp}.}
We refer the reader to~{\EDT \cite[Section~4.3.2]{Bart15-book}} for a discussion of admissible
functions $G$ that lead to a constraint violation as discussed above. The
norm $\|\cdot\|_\sharp$ {\EDT is assumed to be} sufficiently strong to provide
control over the linearization error in the constraint.

{\EDT Assuming that $V = V_h$ is a finite element space, we let $\mathbf{\dot{u}}^n \in \R^N$ denote the coefficient vector representing $\dot{u}_h^n \in V_h$
in a suitable basis. In this case Step~(1) is equivalent to the saddle point problem
\begin{equation*}
  \begin{bmatrix}
    \mathbf{A} & [\mathbf{G}^n]^\transp \\
    \mathbf{G}^n & 0
  \end{bmatrix}
  \begin{bmatrix}
    \mathbf{\dot{u}}^n \\
    \boldsymbol{\lambda}
  \end{bmatrix}
  =
  \begin{bmatrix}
    \mathbf{b} \\
    0
  \end{bmatrix},
\end{equation*}
where $\mathbf{A} \in \R^{N \times N}$ encodes the bilinear form $a(\cdot,\cdot)$,
$\mathbf{b}$ is a representation of the right-hand side and explicit terms,
and $\mathbf{G}^n\in \R^{M \times N}$, $M \in \N$, realizes the linear
constraint $g(\widehat{u}_h^n;\cdot)=0$ with $\boldsymbol{\lambda} \in \R^N$ being the
corresponding Lagrange  multiplier. The dimension $M$ of the discrete constraint
 map depends on the constraint discretization, e.g., with the restriction of a one-dimensional constraint to the vertices of a triangulation, $M$ equals the number of vertices in the
triangulation. In our implementation we employ a direct solver for the solution of the
linear system in every time step to obtain $\mathbf{\dot{u}}^n$, which in turn yields a new iterate.
We refer to~\cite{GutRes17,KPPRS19} for results concerning the stability of the constrained
formulation and implementations based on explicit constructions of bases
of the kernel of $\mathbf{G}^n$. }

\subsection{Harmonic maps}\label{subsec:hm_exp}
We define harmonic maps as stationary configurations for the Dirichlet
energy
\[
I_\hm(u) = \frac12 \int_\O |\nabla u|^2 \dv{x}
\]
in the set of mappings $u\in H^1(\O;\R^\ell)$ for $\O\subset \R^d$
satisfying the pointwise unit-length constraint
\[
|u|^2-1 = 0
\]
almost everywhere in $\O$ and the boundary condition $u|_{\G_D} = u_D$
on a subset $\G_D\subset \p\O$ with positive surface measure. For
an extension $u^0 \in H^1(\O;\R^\ell)$ of $u_D$, Algorithm~\ref{alg:bdf2_iter}
determines a sequence $(u^n)$ that converges to a harmonic map of
low energy. In a discrete setting we use the conforming
finite element spaces
\[
V_h = \cS^1(\cT_h)^\ell,
\]
consisting of elementwise affine, globally continuous functions,
and impose the initial unit-length and subsequent orthogonality relations in the
nodes $z\in \cN_h$ of the triangulation $\cT_h$. To compute certain
error quantities we employ the corresponding nodal interpolation
operator $\cI_h:C(\overline{\O};\R^\ell) \to \cS^1(\cT_h)^\ell$.
The results established
for Algorithm~\ref{alg:bdf2_iter} carry over nearly verbatim to
its discrete counterpart; cf.~\cite{Bart05,Bart16}.
We test its performance for a setting leading to a smooth harmonic map.
Experiments for harmonic maps with singularities led to similar results.

\begin{example}[Stereographic projection]\label{ex:invstereo}
For $d=2$, $\ell=3$ we set $\O = (-1/2,1/2)^2$, $\G_D = \p\O$, and $u_D = \pi_{\rm st}^{-1}|_{\p\O}$
with the inverse stereographic projection $\pi_{\rm st}^{-1}: \O \to S^2$ given for $x\in \O$ by
\[
\pi_{\rm st}^{-1}(x) = (|x|^2 + 1)^{-1} \begin{bmatrix} 2 x \\ 1 - |x|^2 \end{bmatrix}.
\]
Then $u = \pi_{\rm st}^{-1}$ is a smooth harmonic map satisfying $u|_{\p\O} = u_D$.
\end{example}

\begin{figure}
\centering
\includegraphics[width=0.32\linewidth]{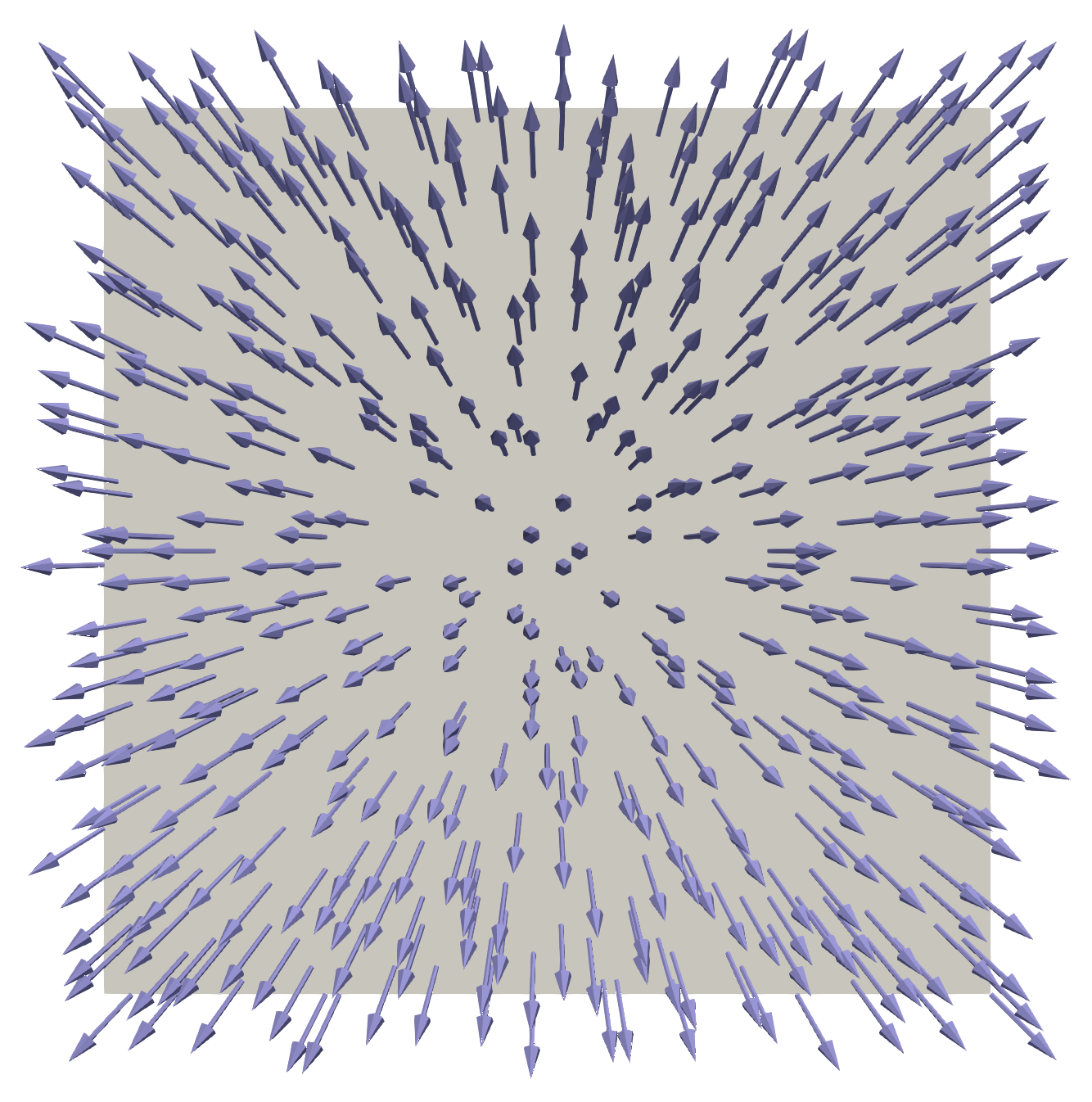}
\includegraphics[width=0.32\linewidth]{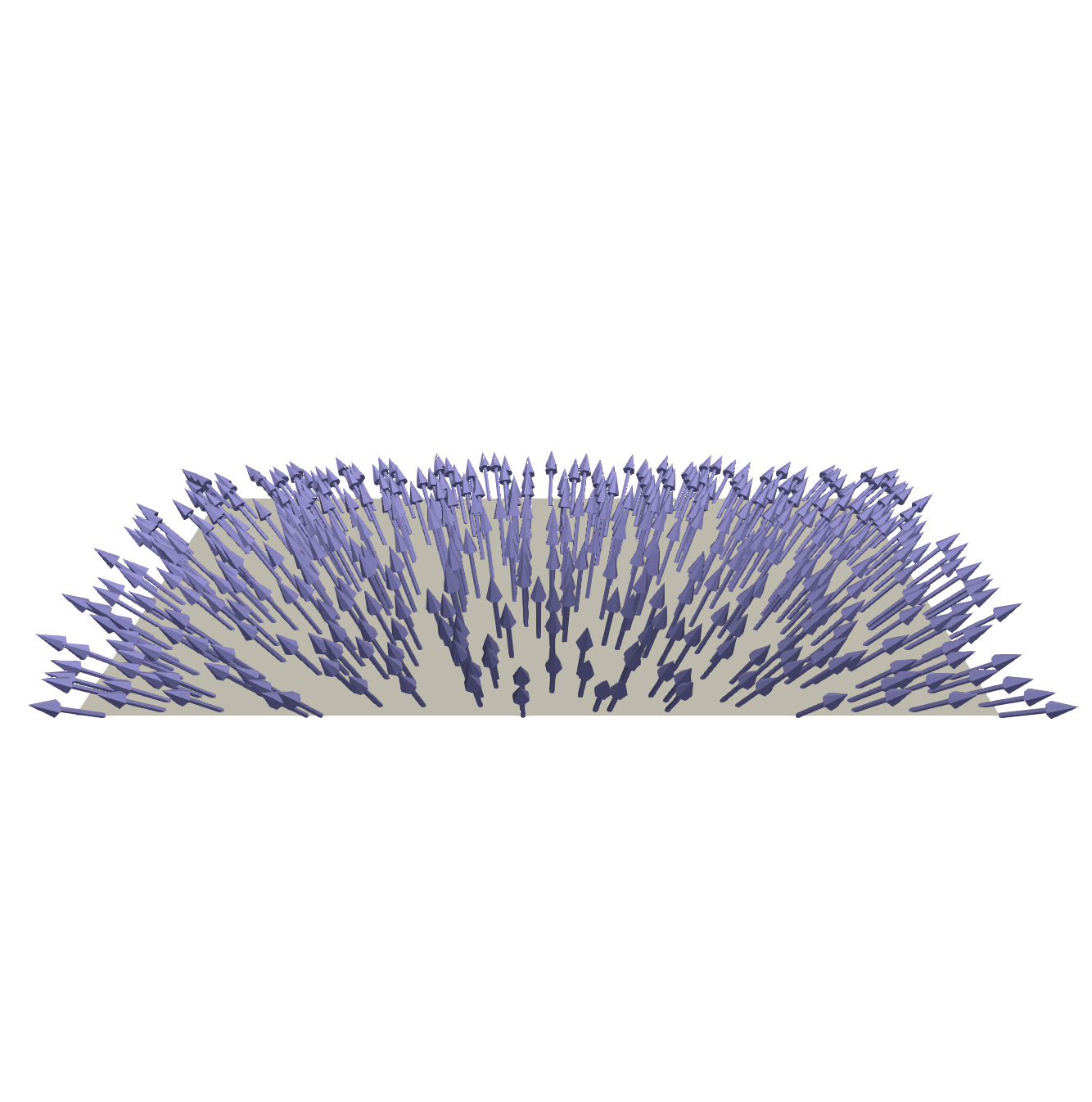}
\includegraphics[width=0.32\linewidth]{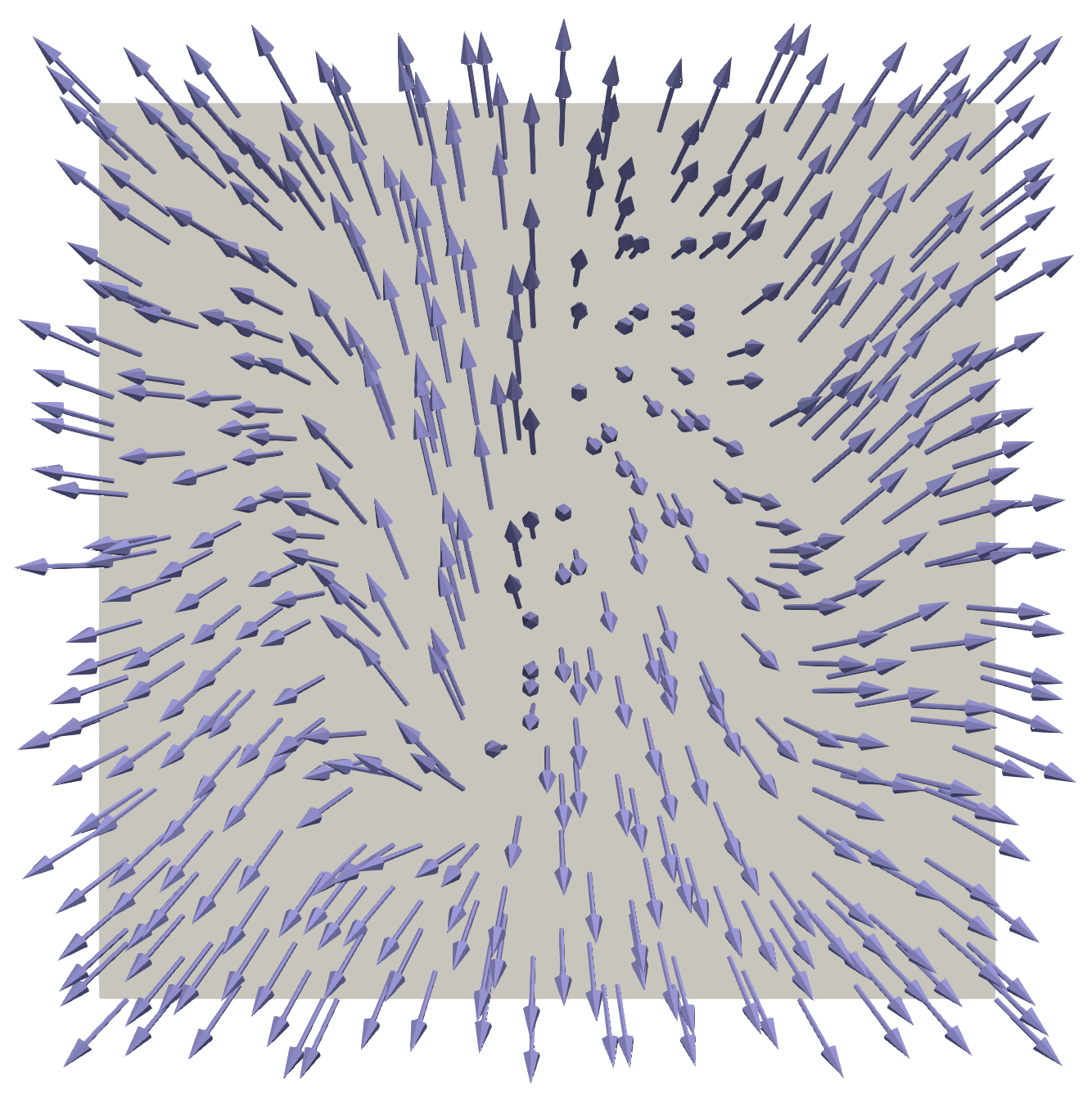}
\caption{\label{fig:invstereo} Nodal interpolant of the inverse stereographic
projection $\pi_{\rm st}^{-1}$ in Example~\ref{ex:invstereo} on a coarse grid (left and middle)
and {\EDT perturbed} initial configuration $u_h^0$ (right).}
\end{figure}

\subsubsection*{Bounded initial data}
The solution $u$ is illustrated in the left and middle plots of Figure~\ref{fig:invstereo}.
For a spatial discretization we choose a uniform triangulation $\cT_h$ of $\O$ into $8192$ right-angled triangles.
The initial function $u_h^0$ and the discrete boundary data $u_{D,h}$ are obtained via a nodal interpolation
of the exact solution $u$ and a subsequent perturbation of interior nodal values; cf.\ the right
plot of Figure~\ref{fig:invstereo}. For this discrete perturbation we have $I_\hm(u_h^0) \approx 22.06$
whereas the exact optimal energy is given by $I_\hm(u) \approx 3.009$.

Using step sizes $\tau = 2^{-m}$, the fixed stopping criterion
$\veps_\mathrm{stop} = 10^{-3}$ in combination with the $L^2$ norm that specifies $\|\cdot\|_\sharp$,
and choosing the $L^2$ and $H^1_D$ inner products for the
gradient flow metric $(\cdot,\cdot)_\star$, respectively, we obtained the results shown in Table~\ref{tab:invstereoh1l2}.
The function $u_h^\stop$ denotes the iterate after $N_\stop$ steps for which the stopping
criterion was satisfied first. The tables show the number of iterations $N_\stop$,
the constraint violation measure
\[
\delta_\uni[u_h] = \big\| \cI_h\left(|u_h|^2 - 1\right) \big\|_{L^1},
\]
the energy errors
\[
\delta_\ener[u_h] = \big| I_\hm[u_h] - I_\hm[u]\big|,
\]
and the discrete regularity quantities
\[
A^2 = \tau^2 \sum_{n=2}^{N'} \|d_t^2 u_h^n\|^2, \quad B^2 = \|d_t u_h^1\|^2,
\]
with $N'=N_\stop$, whose boundedness is needed to guarantee the quadratic constraint consistency results.
The experimental convergence rates $\eoc_\uni$ and $\eoc_\ener$ were computed as logarithmic slopes.

Our observations are as follows: (i) the numbers of iterations to meet the stopping criterion increase
linearly with the decreasing step size and are comparable for the implicit Euler and BDF2 method as
well as for the $L^2$ and $H^1$ gradient flows; (ii) the discrete regularity condition appears to be
satisfied for the $L^2$ and $H^1$ gradient flows and both numerical methods, although a small growth
for the quantity $B^2$ is observed in the case of the $L^2$ flows; (iii) the constraint violation and
energy errors decay linearly for the implicit Euler and (nearly) quadratically for the BDF2 methods
before spatial discretization errors dominate the energy error. Our explanation for~(i) is that the
gradient flows and stopping criteria determine times $t_\star$ at which the time derivative is
sufficiently small and approximately $n\approx t_\star/\tau$ iterations are needed to reach this point
via the time stepping method realized by the algorithm. While the $H^1$ flow provides strong control
over the time derivative, cf.~Remark~\ref{rem:time_der}, the rough initial data appear to lead to
a certain initial growth of the first time derivative in case of the $L^2$ flows providing an
explanation for~(ii). The growth of the time derivative leads to a slight initial reduction
of the quadratic convergence rate reported in~(iii).

  \begin{table}[p]
    {\footnotesize
    \begin{tabular}{ c r c c c c c c c }
      $\tau$ & $N_\stop$ & $\delta_\uni[u_h^\stop]$ & $\mathrm{eoc}_\uni$
        & $A^2$ & $B^2$ & $I_\hm[u_h^\stop]$ & $\delta_\ener[u_h^\stop]$ & $\mathrm{eoc}_\ener$\\ \hline\hline
      \multicolumn{9}{c}{ Implicit Euler method ($L^2$-gradient flow) }\\
      $2^{-4}$  & 23    & 1.288e--01 &  --- & 1.949e+01 & 2.738e+01 & 4.667 & 1.658e+00  &    ---    \\
      $2^{-5}$  & 31    & 1.130e--01 & 0.19 & 5.992e+01 & 9.432e+01 & 4.442 & 1.433e+00  & 0.21\\
      $2^{-6}$  & 47    & 9.219e--02 & 0.29 & 1.594e+02 & 2.937e+02 & 4.133 & 1.124e+00  & 0.35\\
      $2^{-7}$  & 78    & 6.867e--02 & 0.42 & 3.488e+02 & 7.921e+02 & 3.777 & 7.681e--01 & 0.55\\
      $2^{-8}$  & 140   & 4.619e--02 & 0.57 & 6.077e+02 & 1.791e+03 & 3.453 & 4.443e--01 & 0.79\\
      $2^{-9}$  & 261   & 2.823e--02 & 0.71 & 8.446e+02 & 3.375e+03 & 3.229 & 2.201e--01 & 1.01\\
      $2^{-10}$ & 502   & 1.601e--02 & 0.82 & 9.606e+02 & 5.423e+03 & 3.108 & 9.877e--02 & 1.15\\
      $2^{-11}$ & 982   & 8.627e--03 & 0.89 & 9.222e+02 & 7.648e+03 & 3.052 & 4.308e--02 & 1.19\\
      $2^{-12}$ & 1941  & 4.502e--03 & 0.94 & 7.700e+02 & 9.734e+03 & 3.028 & 1.910e--02 & 1.17\\
      $2^{-13}$ & 3858  & 2.305e--03 & 0.97 & 5.711e+02 & 1.145e+04 & 3.018 & 8.678e--03 & 1.13\\
      $2^{-14}$ & 7693  & 1.167e--03 & 0.98 & 3.812e+02 & 1.272e+04 & 3.013 & 3.970e--03 & 1.12\\ \hline
%
      \multicolumn{9}{c}{ BDF2 method ($L^2$-gradient flow) }\\
      $2^{-4}$  & 39    & 2.878e--01 &  --- & 2.125e+01 & 2.738e+01 & 6.518 & 3.509e+00  &    ---    \\
      $2^{-5}$  & 79    & 2.204e--01 & 0.39 & 5.448e+01 & 9.432e+01 & 6.493 & 3.484e+00  &  0.01\\
      $2^{-6}$  & 39    & 1.615e--01 & 0.45 & 1.420e+02 & 2.937e+02 & 5.649 & 2.640e+00  &  0.40\\
      $2^{-7}$  & 76    & 1.035e--01 & 0.64 & 3.229e+02 & 7.921e+02 & 4.557 & 1.548e+00  &  0.77\\
      $2^{-8}$  & 144   & 5.535e--02 & 0.90 & 5.905e+02 & 1.791e+03 & 3.703 & 6.935e--01 &  1.15\\
      $2^{-9}$  & 275   & 2.464e--02 & 1.16 & 8.508e+02 & 3.375e+03 & 3.245 & 2.356e--01 &  1.55\\
      $2^{-10}$ & 534   & 9.370e--03 & 1.39 & 9.793e+02 & 5.423e+03 & 3.074 & 6.448e--02 &  1.86\\
      $2^{-11}$ & 1053  & 3.156e--03 & 1.57 & 9.324e+02 & 7.648e+03 & 3.025 & 1.586e--02 &  2.02\\
      $2^{-12}$ & 2096  & 9.710e--04 & 1.70 & 7.671e+02 & 9.734e+03 & 3.013 & 3.718e--03 &  2.09\\
      $2^{-13}$ & 4184  & 2.794e--04 & 1.79 & 5.635e+02 & 1.145e+04 & 3.010 & 6.977e--04 &  2.41\\
      $2^{-14}$ & 8363  & 7.647e--05 & 1.86 & 3.754e+02 & 1.272e+04 & 3.009 & 7.961e--05 &  3.13\\  \hline\hline
%
      \multicolumn{9}{c}{ Implicit Euler method ($H^1$-gradient flow) }\\
      $2^{-0}$ & 22     & 5.631e--02 &  --- & 1.190e--02 & 3.199e--02 & 3.468 & 4.592e--01 &    ---    \\
      $2^{-1}$ & 32     & 3.429e--02 & 0.72 & 1.351e--02 & 5.686e--02 & 3.236 & 2.265e--01 &  1.02 \\
      $2^{-2}$ & 53     & 1.921e--02 & 0.84 & 1.127e--02 & 8.188e--02 & 3.112 & 1.029e--01 &  1.14 \\
      $2^{-3}$ & 95     & 1.021e--02 & 0.91 & 7.554e--03 & 1.011e--01 & 3.055 & 4.601e--02 &  1.16 \\
      $2^{-4}$ & 178    & 5.270e--03 & 0.95 & 4.424e--03 & 1.133e--01 & 3.030 & 2.097e--02 &  1.13 \\
      $2^{-5}$ & 344    & 2.678e--03 & 0.98 & 2.402e--03 & 1.203e--01 & 3.019 & 9.751e--03 &  1.11 \\
      $2^{-6}$ & 677    & 1.350e--03 & 0.99 & 1.252e--03 & 1.240e--01 & 3.014 & 4.547e--03 &  1.10 \\
      $2^{-7}$ & 1342   & 6.778e--04 & 0.99 & 6.397e--04 & 1.260e--01 & 3.011 & 2.057e--03 &  1.14 \\
      $2^{-8}$ & 2672   & 3.396e--04 & 1.00 & 3.233e--04 & 1.269e--01 & 3.010 & 8.412e--04 &  1.29 \\
      $2^{-9}$ & 5333   & 1.700e--04 & 1.00 & 1.625e--04 & 1.274e--01 & 3.009 & 2.411e--04 &  1.80 \\
      $2^{-10}$ & 10655 & 8.503e--05 & 1.00 & 8.147e--05 & 1.277e--01 & 3.009 & 5.704e--05 &  2.08 \\ \hline
%
      \multicolumn{9}{c}{ BDF2 method ($H^1$-gradient flow) }\\
        $2^{-0}$ & 16     & 7.048e--02  &  --- & 1.458e--02 & 3.199e--02 & 3.570 & 5.604e--01 &    ---    \\
        $2^{-1}$ & 22     & 2.809e--02  & 1.33 & 1.735e--02 & 5.686e--02 & 3.162 & 1.533e--01 &  1.87 \\
        $2^{-2}$ & 44     & 9.071e--03  & 1.63 & 1.396e--02 & 8.188e--02 & 3.046 & 3.712e--02 &  2.05 \\
        $2^{-3}$ & 86     & 2.599e--03  & 1.80 & 8.722e--03 & 1.011e--01 & 3.018 & 9.086e--03 &  2.03 \\
        $2^{-4}$ & 171    & 6.991e--04  & 1.89 & 4.814e--03 & 1.133e--01 & 3.011 & 2.079e--03 &  2.13 \\
        $2^{-5}$ & 341    & 1.817e--04  & 1.94 & 2.515e--03 & 1.203e--01 & 3.009 & 2.706e--04 &  2.94 \\
        $2^{-6}$ & 682    & 4.635e--05  & 1.97 & 1.283e--03 & 1.240e--01 & 3.009 & 1.953e--04 &  0.47 \\
        $2^{-7}$ & 1363   & 1.171e--05  & 1.99 & 6.476e--04 & 1.260e--01 & 3.009 & 3.140e--04 & -0.69 \\
        $2^{-8}$ & 2725   & 2.942e--06  & 1.99 & 3.253e--04 & 1.269e--01 & 3.009 & 3.440e--04 & -0.13 \\
        $2^{-9}$ & 5450   & 7.374e--07  & 2.00 & 1.630e--04 & 1.274e--01 & 3.009 & 3.516e--04 & -0.03 \\
        $2^{-10}$ & 10899 & 1.846e--07  & 2.00 & 8.160e--05 & 1.277e--01 & 3.009 & 3.535e--04 & -0.01 \\  \hline\hline
    \end{tabular}
     }
    \caption{Step sizes, number of iterations, constraint violation, discrete regularity measures, and energy
      errors for the implicit Euler and BDF2 methods approximating $L^2$ and $H^1$ gradient flows for
      harmonic maps {\EDT initialized with a perturbation of the exact solution} in Example~\ref{ex:invstereo}.}
    \label{tab:invstereoh1l2}
  \end{table}

{\EDT
\subsubsection*{Rough initial data}
As a second test problem, we reconsider Example~\ref{ex:invstereo}, again choosing a uniform triangulation $\cT_h$ of $\O$ into $8192$ right-angled triangles.
We now use randomly generated nodal values in the interior of $\O$ instead of a perturbation of the exact solution to initialize the discrete gradient flows, cf.~Figure~\ref{fig:random}.

\begin{figure}
\centering
\includegraphics[width=0.3\linewidth]{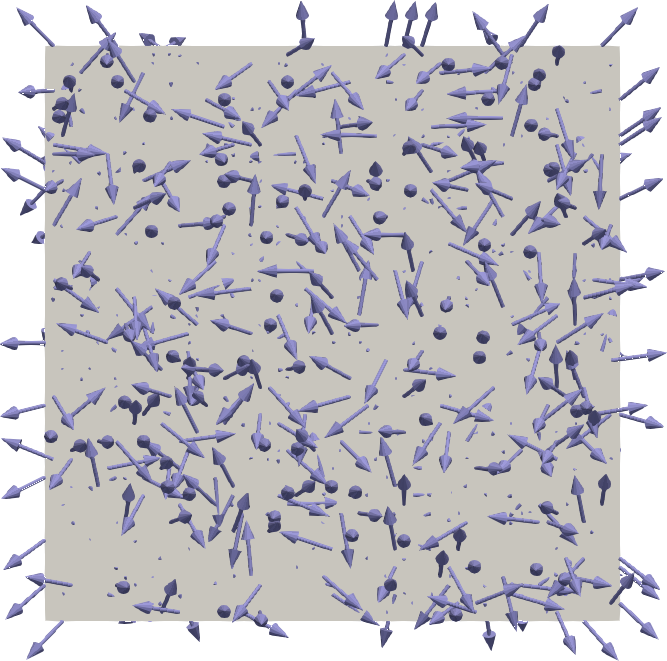}\hspace{0.01\linewidth}
\includegraphics[width=0.3\linewidth]{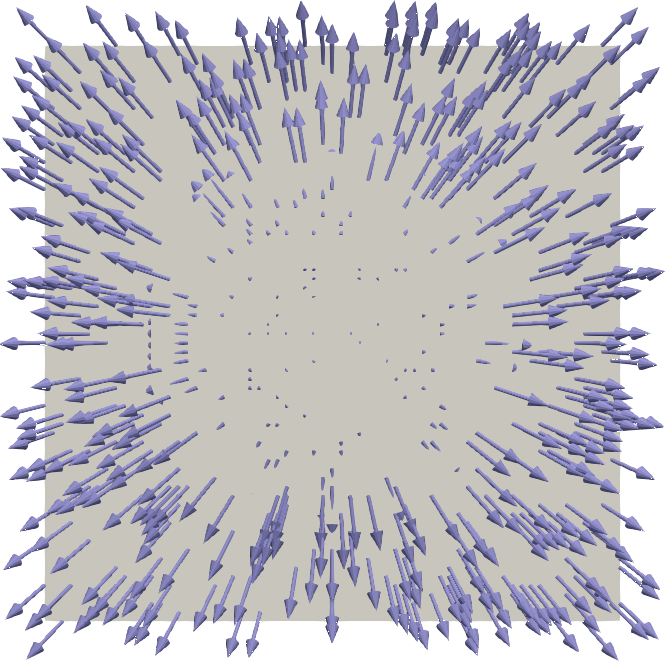}\hspace{0.01\linewidth}
\includegraphics[width=0.3\linewidth]{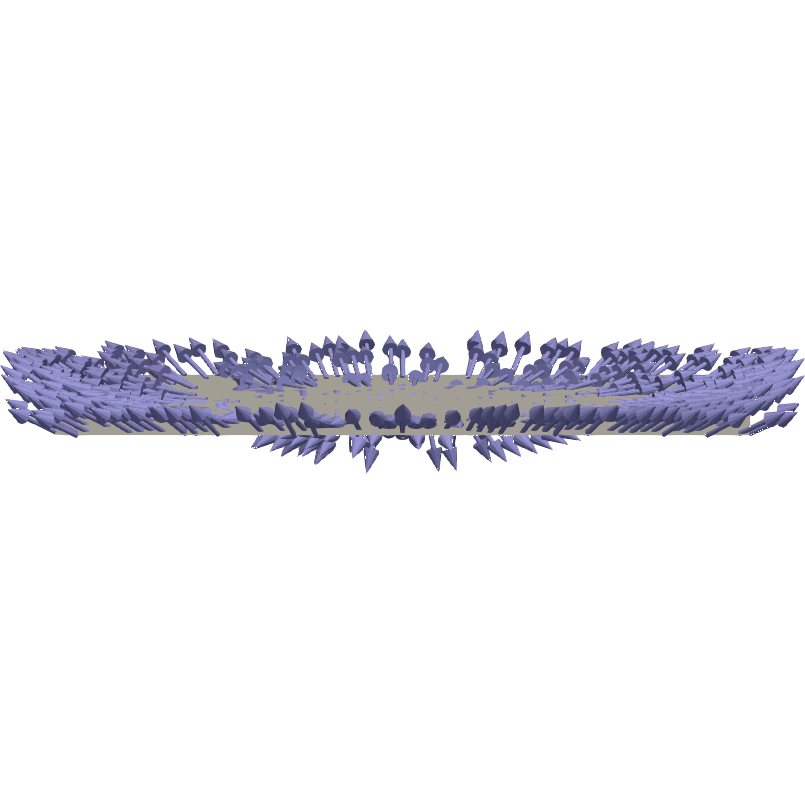}
\caption{\label{fig:random} Nodal values of the rough initial data $u_h^0$ for the approximation of Example~\ref{ex:invstereo} on a coarse grid (left) and final iterate for the BDF2 method with $\tau=2^{-8}$ (middle, right).}
\end{figure}

The initial function $u_h^0$ is obtained by choosing the inverse stereographic projection $u_h^0(z) = \pi_{\rm st}^{-1}(z)$ for all boundary nodes $z \in \cN_h \cap \G_D$ and choosing a random value for $u_h^0(z)$ at all interior nodes $z \in \cN_h \cap \O$.
This random choice is realized via generating for each interior node $z$ two independent and uniformly distributed pseudo-random numbers $\alpha_1 \in (-\pi/2,\pi/2)$, $\alpha_2 \in (-\pi,\pi)$
that we interpret as the angles in the representation of $u_h^0(z)$ in spherical coordinates with the fixed radial distance $1$.
This procedure is conducted only once, such that we use the \emph{same} randomly chosen $u_h^0$ for every step size.
An interpolation of the discrete function $u_h^0$ on a coarse grid is illustrated in the left plot in Figure~\ref{fig:random}.

Using step sizes $\tau = 2^{-m}$, the fixed stopping criterion
$\veps_\mathrm{stop} = 10^{-3}$ in combination with the $L^2$ norm that specifies $\|\cdot\|_\sharp$,
and choosing the $L^2$ and $H^1_D$ inner products for the
gradient flow metric $(\cdot,\cdot)_\star$, respectively, we obtained the results shown in Table~\ref{tab:randomh1l2}.

Partially in contrast to the first experiment with bounded initial data, we observe the following:
(i)~while for the $H^1$ flow the iteration numbers to meet the stopping criterion increase linearly with the decreasing step size, this correlation is not as pronounced for the $L^2$ flow;
(ii) the discrete regularity condition appears to be still satisfied for the $H^1$ gradient flow, while it appears to be clearly violated for the $L^2$ flow;
(iii) only the constraint violation for the $H^1$ flow still decays quadratically with the BDF2 method, while for the $L^2$ flow we only observe a rough linear convergence in the constraint, and the energies do not seem to converge at all for both flows;
(iv) all iterations appear to satisfy the stopping criterion at a meta stable state of the (discrete) energy landscape as shown in the middle and right plot in Figure~\ref{fig:random}.
Observation~(i) underlines the preconditioning effects of the $H^1$ scalar product as the natural metric for the minimization problem.
While the $H^1$ flow provides strong control over the time derivative, cf.~Remark~\ref{rem:time_der}, and is less sensitive to rough initial data, (ii) shows that for the irregular initial data the regularity condition is not satisfied in the $L^2$ flow, which explains the constraint error behavior reported in~(iii).
Finally the termination at a meta stable state~(iv), which is possibly caused by larger constraint errors due to the extremely high initial energies, explains the lack of energy convergence in observation~(iv).
We note that the occurrence of distinct meta stable states in the gradient flow appears to be related to specific random initial data and that stopping of the algorithm at such a state was also observed with the Euler method for the same initial data.
With all other parameters unchanged, we did not observe this stopping phenomenon for a different set of random interior nodal values.

\begin{table}[h!]
  {\footnotesize
  \begin{tabular}{ c r c c c c c c c }
    $\tau$ & $N_\stop$ & $\delta_\uni[u_h^\stop]$ & $\mathrm{eoc}_\uni$
      & $A^2$ & $B^2$ & $I_\hm[u_h^\stop]$ & $\delta_\ener[u_h^\stop]$ & $\mathrm{eoc}_\ener$\\ \hline\hline
    \multicolumn{9}{c}{ BDF2 method ($L^2$-gradient flow) }\\
    $2^{-8}$   & 718    & 6.133e+00 &  --- & 1.989e+05 & 1.824e+04 & 666.7 & 6.637e+02 &    ---    \\
    $2^{-9}$   & 458    & 4.203e+00 & 0.55 & 4.908e+05 & 6.762e+04 & 714.7 & 7.117e+02 & -0.10\\
    $2^{-10}$  & 792    & 2.839e+00 & 0.57 & 1.176e+06 & 2.361e+05 & 794.2 & 7.912e+02 & -0.15 \\
    $2^{-11}$  & 4288   & 1.887e+00 & 0.59 & 2.775e+06 & 7.526e+05 & 753.8 & 7.508e+02 & 0.08 \\
    $2^{-12}$  & 8206   & 1.195e+00 & 0.66 & 6.286e+06 & 2.129e+06 & 514.9 & 5.119e+02 & 0.55 \\
    $2^{-13}$  & 5877   & 6.770e-01 & 0.82 & 1.251e+07 & 5.261e+06 & 246.2 & 2.432e+02 & 1.07 \\
    $2^{-14}$  & 27184  & 3.296e-01 & 1.04 & 2.079e+07 & 1.130e+07 & 98.16 & 9.515e+01 & 1.35 \\
    $2^{-15}$  & 22244  & 1.328e-01 & 1.31 & 2.737e+07 & 2.111e+07 & 22.65 & 1.964e+01 & 2.28 \\
    \hline\hline
    \multicolumn{9}{c}{ BDF2 method ($H^1$-gradient flow) }\\
    $2^{-1}$  & 306    & 2.537e-01 &  --- & 4.129e-01 & 1.343e-01 & 28.46 & 2.545e+01 &    ---    \\
    $2^{-2}$  & 426    & 6.806e-02 & 1.90 & 3.928e-01 & 1.934e-01 & 12.00 & 8.994e+00 & 1.50 \\
    $2^{-3}$  & 837    & 1.439e-02 & 2.24 & 2.442e-01 & 2.388e-01 & 10.14 & 7.130e+00 & 0.34 \\
    $2^{-4}$  & 1796   & 3.048e-03 & 2.24 & 1.334e-01 & 2.677e-01 & 9.939 & 6.930e+00 & 0.04 \\
    $2^{-5}$  & 4048   & 6.702e-04 & 2.19 & 6.200e-02 & 2.841e-01 & 9.909 & 6.900e+00 & 0.01 \\
    $2^{-6}$  & 7483   & 1.580e-04 & 2.09 & 3.116e-02 & 2.929e-01 & 9.904 & 6.895e+00 & 0.00 \\
    $2^{-7}$  & 14849  & 3.832e-05 & 2.04 & 1.562e-02 & 2.975e-01 & 9.903 & 6.893e+00 & 0.00 \\
    $2^{-8}$  & 29646  & 9.445e-06 & 2.02 & 7.802e-03 & 2.998e-01 & 9.902 & 6.893e+00 & 0.00 \\
    \hline\hline
  \end{tabular}
   }
  \caption{Step sizes, number of iterations, constraint violation, discrete regularity measures, and energy
    errors for the BDF2 method approximating $L^2$ and $H^1$ gradient flows for
    harmonic maps using random initial data in Example~\ref{ex:invstereo}.}
  \label{tab:randomh1l2}
\end{table}

}

\subsection{Bending isometries}\label{subsec:bending_exp}
Large bending deformations of thin elastic sheets can be determined
via a dimensionally reduced description resulting as a $\G$ limit of
three-dimensional hyperelasticity; cf.~\cite{FrJaMu02}. The variational formulation
seeks a minimizing deformation for the functional
\[
I_\bend(u) = \frac12 \int_\o |D^2 u|^2 \dv{x}
\]
in the set of functions $u \in H^2(\o;\R^3)$ satisfying the pointwise
isometry constraint
\[
(\nabla u)^\transp (\nabla u)  - \id_{2\times 2} = 0,
\]
with the identity matrix $ \id_{2\times 2} \in \R^{2\times 2}$, and the boundary conditions
\[
u|_{\g_D} = u_D, \quad \nabla u|_{\g_D} = \phi_D,
\]
for given functions $u_D\in C(\g_D;\R^3)$ and $\phi_D \in C(\g_D;\R^{3\times 2})$.
Our discretization is based on the nonconforming space of discrete
Kirchhoff triangles and a discrete gradient operator, i.e.,
\[\begin{split}
V_h = \big\{ & v_h \in C(\overline{\o};\R^3): v_h|_T \in P_{3,{\rm red}}(T)^3 \ \text{for all } T\in \cT_h, \\
& \nabla v_h \, \text{continuous in every }z\in \cN_h \big\},
\end{split}\]
where $P_{3,{\rm red}}(T)$ denotes a nine-dimensional subspace of cubic polynomials,
and, with the space of elementwise quadratic, continuous functions $\cS^2(\cT_h)$,
\[
\nabla_h : V_h \to \cS^2(\cT_h)^{3\times 2}.
\]
The matrix of second derivatives $D^2 u$ in $I_{\rm iso}$ is replaced by
the discrete second derivatives $D_h^2 u_h = \nabla \nabla_h u_h$. The isometry constraint is
imposed at the nodes $z\in \cN_h$ of the triangulation; cf.~\cite{Bart13,BarPal22}
for related details. The discretization defines the bilinear form
\[
a_h(u_h,v_h) = \int_\o D_h^2 u_h : D_h^2 v_h \dv{x},
\]
the linear functional
\[
\ell_{\bc,h}(u_h) = \big(u_h|_{\g_D},\nabla_h u_h|_{\g_D}\big),
\]
and the linearized constraint functional evaluated at the nodes of the triangulation
\[
g_h(\hu_h;v_h)
= \big(\big[(\nabla \hu_h)^\transp (\nabla v_h) + (\nabla v_h)^\transp (\nabla \hu_h)\big](z)\big)_{z\in \cN_h}.
\]
Other approaches to the discretization of nonlinear bending problems such as
discontinuous Galerkin methods as devised in~\cite{BGNY23}
can also be formulated in this abstract way.
We test Algorithm~\ref{alg:bdf2_iter_gen} for a setting leading to the formation of a M\"obius strip.

\begin{example}[M\"obius strip]\label{ex:moebius}
Let $\o = (0,L)\times (-w/2,w/2)$ and $\g_D = \{0,L\}\times [-w/2,w/2]$ with $L = 12$ and $w=2$.
We choose boundary data $u_D$ and $\phi_D$ that map the two sides contained in $\gamma_D$ to the
same interval but enforce a half-rotation of the strip~$\o$.
\end{example}

 \begin{figure}[p]
    \centering
    \includegraphics[width=0.24\linewidth]{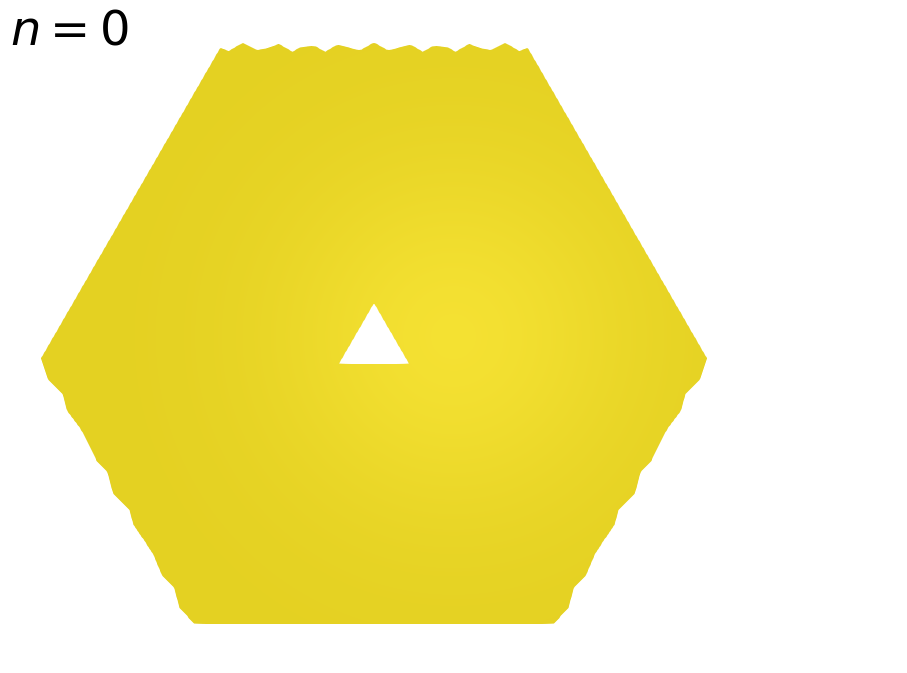}
    \includegraphics[width=0.24\linewidth]{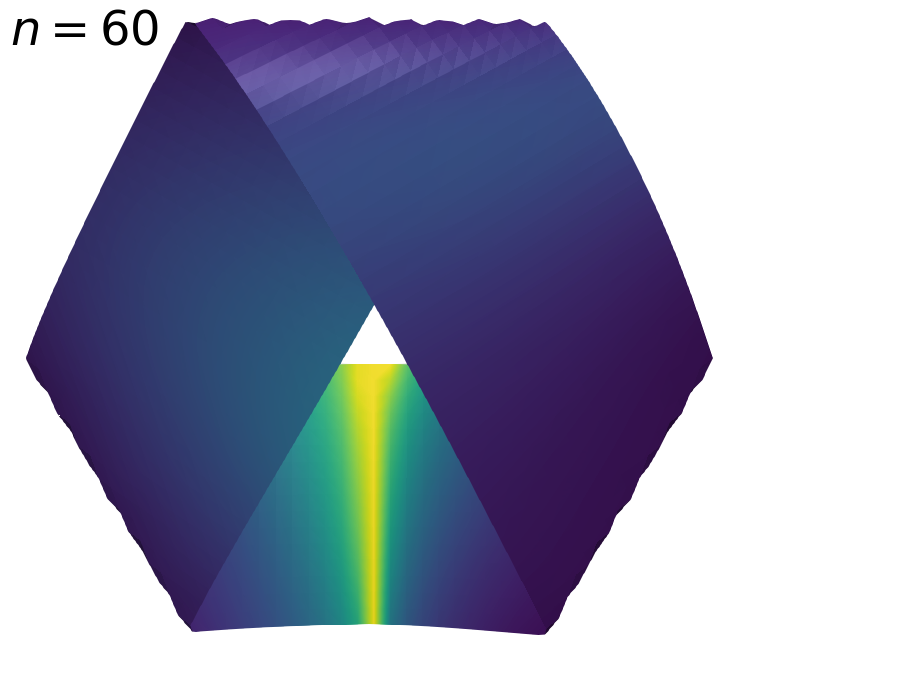}
    \includegraphics[width=0.24\linewidth]{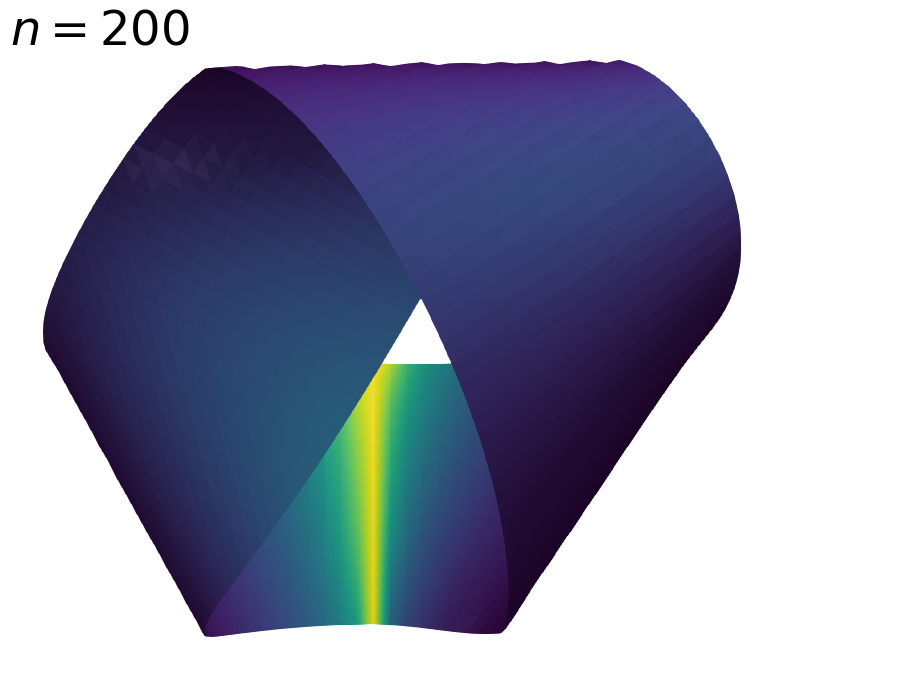}
    \includegraphics[width=0.24\linewidth]{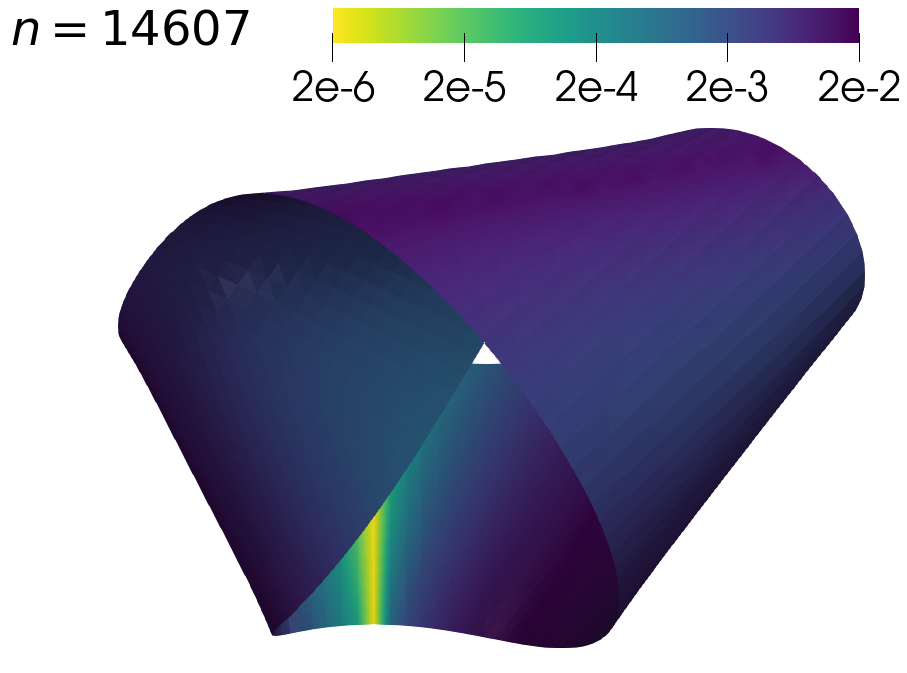}\\
    \includegraphics[width=0.24\linewidth]{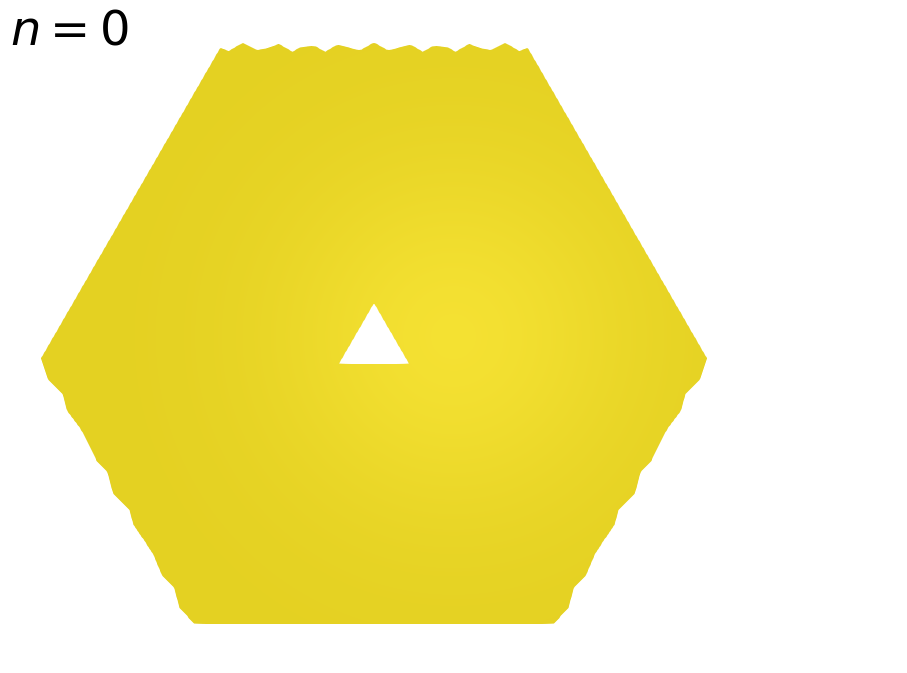}
    \includegraphics[width=0.24\linewidth]{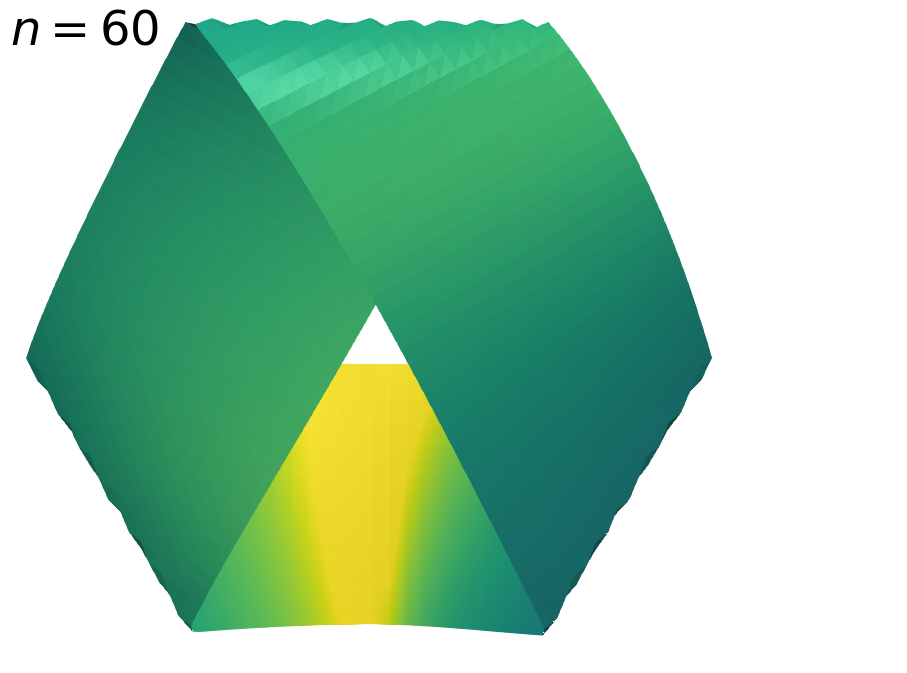}
    \includegraphics[width=0.24\linewidth]{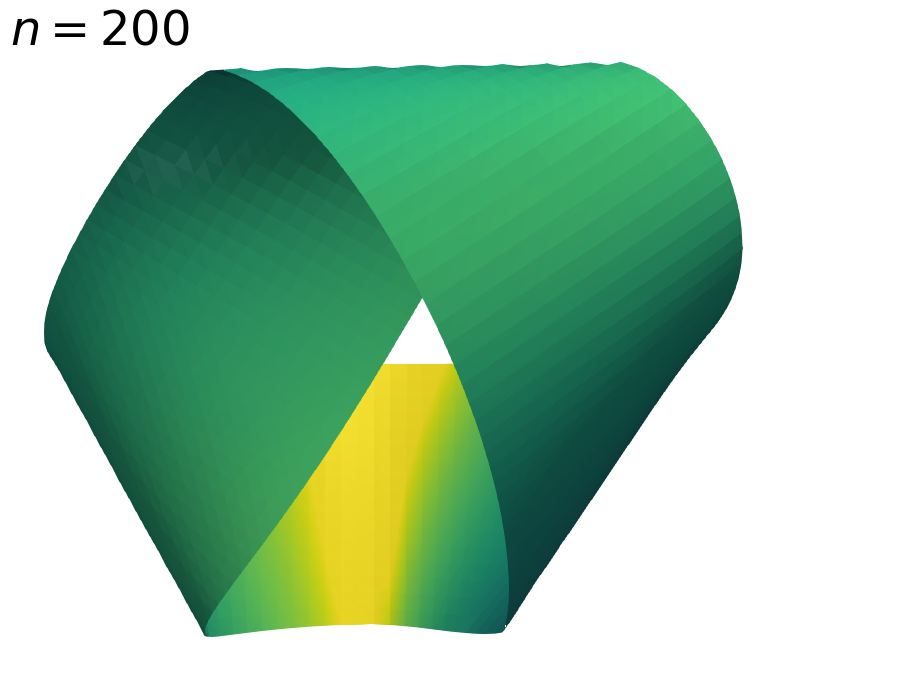}
    \includegraphics[width=0.24\linewidth]{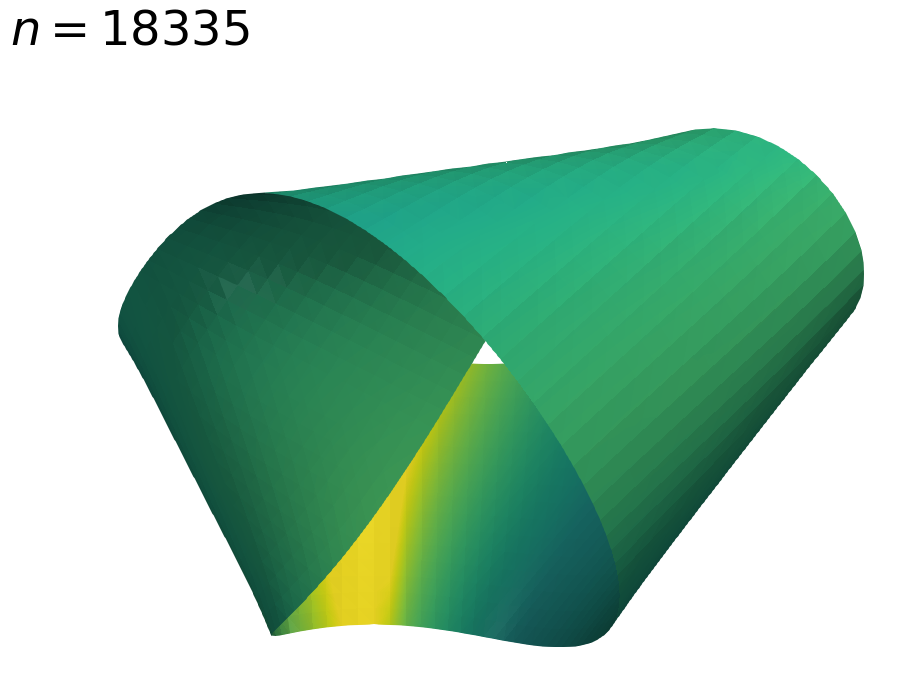}
    \caption{Evolution of an initially flat M\"obius strip from Example~\ref{ex:moebius} using
      the implicit Euler (top row) and BDF2 methods (bottom row) realizing discrete $H^2$-gradient
      flows with step sizes $\tau=2^{-7}$. The coloring represents the constraint violation.}
    \label{fig:moebius}
  \end{figure}

 \begin{table}[p]
    {\footnotesize
      \begin{tabular}{ c r c c c c c }
      $\tau$ & $N_\stop$ & $\delta_\iso[u_h^\stop]$ & $\mathrm{eoc}_\iso$
        & $A^2$ & $B^2$ & $I_\bend[u_h^\stop]$ \\ \hline\hline
      \multicolumn{7}{c}{ Implicit Euler method ($H^2_h$-gradient flow)}\\
      $2^{-0}$  & 349    & 2.255e+01   &  ---  & 4.617e+02 & 1.054e--01 & 14.98 \\
      $2^{-1}$  & 198    & 1.342e+01   & 0.75  & 1.038e+03 & 1.874e--01 & 12.37 \\
      $2^{-2}$  & 446    & 5.243e+00   & 1.36  & 9.363e+02 & 2.699e--01 & 10.95 \\
      $2^{-3}$  & 874    & 2.493e+00   & 1.07  & 9.736e+02 & 3.332e--01 & 10.29 \\
      $2^{-4}$  & 1791   & 1.226e+00   & 1.02  & 8.741e+02 & 3.735e--01 & 9.983 \\
      $2^{-5}$  & 3617   & 6.142e--01  & 1.00  & 6.839e+02 & 3.965e--01 & 9.840 \\
      $2^{-6}$  & 7261   & 3.072e--01  & 1.00  & 4.351e+02 & 4.088e--01 & 9.768 \\
      $2^{-7}$  & 14538  & 1.538e--01  & 1.00  & 2.492e+02 & 4.151e--01 & 9.732 \\
      $2^{-9}$  & 29086  & 7.695e--02  & 1.00  & 1.339e+02 & 4.184e--01 & 9.715 \\ \hline
%
      \multicolumn{7}{c}{ BDF2 method ($H^2_h$-gradient flow)}\\
      $2^{-0}$  & 151    & 5.993e+00   &  --- & 5.970e+01 & 1.054e--01 & 11.63 \\
      $2^{-1}$  & 263    & 6.713e+00   &-0.16 & 2.789e+02 & 1.874e--01 & 11.04 \\
      $2^{-2}$  & 490    & 2.552e+00   & 1.40 & 4.090e+02 & 2.699e--01 & 10.05 \\
      $2^{-3}$  & 1049   & 9.924e--01  & 1.36 & 6.480e+02 & 3.332e--01 & 9.834 \\
      $2^{-4}$  & 2273   & 1.905e--01  & 2.38 & 4.980e+02 & 3.735e--01 & 9.727 \\
      $2^{-5}$  & 4578   & 4.827e--02  & 1.98 & 5.081e+02 & 3.965e--01 & 9.704 \\
      $2^{-6}$  & 9166   & 9.470e--03  & 2.35 & 3.986e+02 & 4.088e--01 & 9.698 \\
      $2^{-7}$  & 18313  & 1.506e--03  & 2.65 & 2.535e+02 & 4.151e--01 & 9.697 \\
      $2^{-9}$  & 36590  & 2.065e--04  & 2.87 & 1.390e+02 & 4.184e--01 & 9.697 \\  \hline\hline
    \end{tabular}
      }
    \caption{Step sizes, number of iterations, constraint violation, discrete regularity measures,
      and energies for the implicit Euler and BDF2 methods approximating a discrete $H^2$ gradient flow
      leading to the formation of a M\"obius strip for the boundary conditions specified in Example~\ref{ex:moebius}.}
    \label{tab:moebiush2}
  \end{table}


As initial data $u^0$ that is compatible with the boundary conditions and isometry constraint
we use a Lipschitz continuous function that defines a flat, folded M\"obius strip. The
interpolated function $u_h^0$ on a triangulation of $\o$ into 3072 triangles resembling halved squares
is shown in Figure~\ref{fig:moebius}. The initial data is
thus of unbounded bending energy as the mesh-size tends to zero. Using the bilinear form
$a_h$ to define discrete $H^2$ gradient flows determined by
the implicit Euler and BDF2 methods we obtained the iterates shown also in Figure~\ref{fig:moebius}.
The unfolding of the initially flat configuration was obtained with a forcing term in the energy
that was set to zero for $t_n \ge t_f = 2$. From the coloring used for the plots in the figure we
observe that the BDF2 method leads to significantly
reduced constraint errors. This observation is confirmed by the numbers displayed in
Table~\ref{tab:moebiush2}. For the implicit Euler and the BDF2 methods we computed the
isometry constraint violation errors
\[
\delta_\iso[u_h] = \big\| \cI_h\big(|(\nabla u_h)^\transp (\nabla u_h) - \id_{2 \times 2}|\big) \big\|_{L^1},
\]
and the discrete regularity quantities
\[
A^2 =  \tau^2 \sum_{n=2}^{N'} \|\nabla d_t^2 u_h^n\|^2, \quad B^2 = \|\nabla d_t u_h^1 \|^2,
\]
with $N'=N_\stop$. Correspondingly, we used $\|v_h\|_\sharp = \|\nabla v_h\|$ to evaluate the stopping criterion
with $\veps_\stop = 10^{-3}$. Our overall observations are similar to those for the approximation of
harmonic maps using an $H^1$ gradient flow. In particular, we find that (i)~the number of
iterations needed to satisfy the stopping criterion grow linearly with $\tau^{-1}$ and
are comparable for the implicit Euler and BDF2 methods, (ii)~the constraint violation decays
significantly faster for the BDF2 method than for the implicit Euler method and the discrete
energies are lower, and (iii)~the discrete regularity quantities remain bounded as the
step sizes are reduced.

\subsection*{Acknowledgments}
Financial support by the German Research Foundation (DFG)
via research unit FOR 3013 {\em Vector- and tensor-valued surface PDEs}
(Grant no. BA2268/6–1) is gratefully acknowledged.


\section*{References}
\printbibliography[heading=none]

@book {Thom06-book,
    AUTHOR = {Thom\'{e}e, Vidar},
     TITLE = {Galerkin finite element methods for parabolic problems},
    SERIES = {Springer Series in Computational Mathematics},
    VOLUME = {25},
   EDITION = {Second},
 PUBLISHER = {Springer-Verlag, Berlin},
      YEAR = {2006},
     PAGES = {xii+370},
      ISBN = {978-3-540-33121-6; 3-540-33121-2},
   MRCLASS = {65-02 (65M15 65M60)},
  MRNUMBER = {2249024},
}

@article {GutRes17,
    AUTHOR = {Guti\'{e}rrez-Santacreu, Juan Vicente and Restelli, Marco},
     TITLE = {Inf-sup stable finite element methods for the
              {L}andau-{L}ifshitz-{G}ilbert and harmonic map heat flow
              equations},
   JOURNAL = {SIAM J. Numer. Anal.},
  FJOURNAL = {SIAM Journal on Numerical Analysis},
    VOLUME = {55},
      YEAR = {2017},
    NUMBER = {6},
     PAGES = {2565--2591},
      ISSN = {0036-1429},
   MRCLASS = {65M60 (35K55 35Q60 65M12)},
  MRNUMBER = {3719027},
MRREVIEWER = {Severiano Gonzalez-Pinto},
       DOI = {10.1137/17M1116799},
       URL = {https://doi.org/10.1137/17M1116799},
}

@article {BadGuiGut11,
    AUTHOR = {Badia, Santiago and Guill\'{e}n-Gonz\'{a}lez, Francisco and
              Guti\'{e}rrez-Santacreu, Juan Vicente},
     TITLE = {Finite element approximation of nematic liquid crystal flows
              using a saddle-point structure},
   JOURNAL = {J. Comput. Phys.},
  FJOURNAL = {Journal of Computational Physics},
    VOLUME = {230},
      YEAR = {2011},
    NUMBER = {4},
     PAGES = {1686--1706},
      ISSN = {0021-9991},
   MRCLASS = {76A15 (76M10)},
  MRNUMBER = {2753385},
       DOI = {10.1016/j.jcp.2010.11.033},
       URL = {https://doi.org/10.1016/j.jcp.2010.11.033},
}

@article {QiTaWi09,
    AUTHOR = {Hu, Qiya and Tai, Xue-Cheng and Winther, Ragnar},
     TITLE = {A saddle point approach to the computation of harmonic maps},
   JOURNAL = {SIAM J. Numer. Anal.},
  FJOURNAL = {SIAM Journal on Numerical Analysis},
    VOLUME = {47},
      YEAR = {2009},
    NUMBER = {2},
     PAGES = {1500--1523},
      ISSN = {0036-1429},
   MRCLASS = {35J62 (35J50 35J57 65N30)},
  MRNUMBER = {2497338},
       DOI = {10.1137/060675575},
       URL = {https://doi.org/10.1137/060675575},
}

@article {Alou97,
    AUTHOR = {Alouges, Fran\c{c}ois},
     TITLE = {A new algorithm for computing liquid crystal stable
              configurations: the harmonic mapping case},
   JOURNAL = {SIAM J. Numer. Anal.},
  FJOURNAL = {SIAM Journal on Numerical Analysis},
    VOLUME = {34},
      YEAR = {1997},
    NUMBER = {5},
     PAGES = {1708--1726},
      ISSN = {0036-1429},
   MRCLASS = {82D30 (65C20 76A15 76M25)},
  MRNUMBER = {1472192},
MRREVIEWER = {Denis Serre},
       DOI = {10.1137/S0036142994264249},
       URL = {https://doi.org/10.1137/S0036142994264249},
}

@article {Bart05,
    AUTHOR = {Bartels, S\"{o}ren},
     TITLE = {Stability and convergence of finite-element approximation
              schemes for harmonic maps},
   JOURNAL = {SIAM J. Numer. Anal.},
  FJOURNAL = {SIAM Journal on Numerical Analysis},
    VOLUME = {43},
      YEAR = {2005},
    NUMBER = {1},
     PAGES = {220--238},
      ISSN = {0036-1429},
   MRCLASS = {65N30 (58E20 76A15 82D30)},
  MRNUMBER = {2177142},
MRREVIEWER = {R. Kodn\'{a}r},
       DOI = {10.1137/040606594},
       URL = {https://doi.org/10.1137/040606594},
}

@book {Bart15-book,
    AUTHOR = {Bartels, S\"{o}ren},
     TITLE = {Numerical methods for nonlinear partial differential
              equations},
    SERIES = {Springer Series in Computational Mathematics},
    VOLUME = {47},
 PUBLISHER = {Springer, Cham},
      YEAR = {2015},
     PAGES = {x+393},
   MRCLASS = {65-01 (35A15 35A35 65Mxx 65Nxx)},
  MRNUMBER = {3309171},
MRREVIEWER = {Karsten Urban},
       DOI = {10.1007/978-3-319-13797-1},
       URL = {https://doi.org/10.1007/978-3-319-13797-1},
}

@article {Rivi95,
    AUTHOR = {Rivi\`ere, Tristan},
     TITLE = {Everywhere discontinuous harmonic maps into spheres},
   JOURNAL = {Acta Math.},
  FJOURNAL = {Acta Mathematica},
    VOLUME = {175},
      YEAR = {1995},
    NUMBER = {2},
     PAGES = {197--226},
      ISSN = {0001-5962},
   MRCLASS = {58E20 (49N60)},
  MRNUMBER = {1368247},
MRREVIEWER = {Martin Fuchs},
       DOI = {10.1007/BF02393305},
       URL = {https://doi.org/10.1007/BF02393305},
}

@article {KPPRS19,
    AUTHOR = {Kraus, Johannes and Pfeiler, Carl-Martin and Praetorius, Dirk
              and Ruggeri, Michele and Stiftner, Bernhard},
     TITLE = {Iterative solution and preconditioning for the tangent plane
              scheme in computational micromagnetics},
   JOURNAL = {J. Comput. Phys.},
  FJOURNAL = {Journal of Computational Physics},
    VOLUME = {398},
      YEAR = {2019},
     PAGES = {108866, 27},
      ISSN = {0021-9991},
   MRCLASS = {65M99 (65Z05)},
  MRNUMBER = {3995784},
MRREVIEWER = {Khaled Mohammed Saad},
       DOI = {10.1016/j.jcp.2019.108866},
       URL = {https://doi.org/10.1016/j.jcp.2019.108866},
}

@article {Bart16,
    AUTHOR = {Bartels, S\"{o}ren},
     TITLE = {Projection-free approximation of geometrically constrained
              partial differential equations},
   JOURNAL = {Math. Comp.},
  FJOURNAL = {Mathematics of Computation},
    VOLUME = {85},
      YEAR = {2016},
    NUMBER = {299},
     PAGES = {1033--1049},
      ISSN = {0025-5718},
   MRCLASS = {65J05 (65M12 65M60 65N12 65N30)},
  MRNUMBER = {3454357},
MRREVIEWER = {Waleed M. Abd-Elhameed},
       DOI = {10.1090/mcom/3008},
       URL = {https://doi.org/10.1090/mcom/3008},
}

@article {AFKL21,
    AUTHOR = {Akrivis, Georgios and Feischl, Michael and Kov\'{a}cs, Bal\'{a}zs and
              Lubich, Christian},
     TITLE = {Higher-order linearly implicit full discretization of the
              {L}andau-{L}ifshitz-{G}ilbert equation},
   JOURNAL = {Math. Comp.},
  FJOURNAL = {Mathematics of Computation},
    VOLUME = {90},
      YEAR = {2021},
    NUMBER = {329},
     PAGES = {995--1038},
      ISSN = {0025-5718},
   MRCLASS = {65M60 (35Q60 65L06 65M12 65M15)},
  MRNUMBER = {4232216},
MRREVIEWER = {Hamdullah Y\"{u}cel},
       DOI = {10.1090/mcom/3597},
       URL = {https://doi.org/10.1090/mcom/3597},
}

@article {BarPal22,
    AUTHOR = {Bartels, S\"{o}ren and Palus, Christian},
     TITLE = {Stable gradient flow discretizations for simulating bilayer
              plate bending with isometry and obstacle constraints},
   JOURNAL = {IMA J. Numer. Anal.},
  FJOURNAL = {IMA Journal of Numerical Analysis},
    VOLUME = {42},
      YEAR = {2022},
    NUMBER = {3},
     PAGES = {1903--1928},
      ISSN = {0272-4979},
   MRCLASS = {65N30 (65N12)},
  MRNUMBER = {4454912},
       DOI = {10.1093/imanum/drab050},
       URL = {https://doi.org/10.1093/imanum/drab050},
}

@article {Bart13,
    AUTHOR = {Bartels, S\"{o}ren},
     TITLE = {Approximation of large bending isometries with discrete
              {K}irchhoff triangles},
   JOURNAL = {SIAM J. Numer. Anal.},
  FJOURNAL = {SIAM Journal on Numerical Analysis},
    VOLUME = {51},
      YEAR = {2013},
    NUMBER = {1},
     PAGES = {516--525},
      ISSN = {0036-1429},
   MRCLASS = {65N30 (74S05)},
  MRNUMBER = {3033021},
MRREVIEWER = {Josef Dan\v{e}k},
       DOI = {10.1137/110855405},
       URL = {https://doi.org/10.1137/110855405},
}

@book {HaiWan96,
    AUTHOR = {Hairer, E. and Wanner, G.},
     TITLE = {Solving ordinary differential equations II: stiff and differential-algebraic problems},
    SERIES = {Springer Series in Computational Mathematics},
    VOLUME = {14},
   EDITION = {Second},
 PUBLISHER = {Springer-Verlag, Berlin},
      YEAR = {1996},
     PAGES = {xvi+614},
      ISBN = {3-540-60452-9},
   MRCLASS = {65-02 (34A09 34A45 65-01 65Lxx)},
  MRNUMBER = {1439506},
       DOI = {10.1007/978-3-642-05221-7},
       URL = {https://doi.org/10.1007/978-3-642-05221-7},
}

@article{BaKoWa23,
    author = {Bartels, Sören and Kovács, Balázs and Wang, Zhangxian},
    title = "{Error analysis for the numerical approximation of the harmonic map heat flow with nodal constraints}",
    journal = {IMA J.~Numer.~Anal.},
    volume = {44},
    number = {2},
    pages = {633-653},
    year = {2023},
    month = {06},
    issn = {0272-4979},
    doi = {10.1093/imanum/drad037},
    url = {https://doi.org/10.1093/imanum/drad037},
}

@article {AKST14,
    AUTHOR = {Alouges, Fran\c{c}ois and Kritsikis, Evaggelos and Steiner, Jutta
              and Toussaint, Jean-Christophe},
     TITLE = {A convergent and precise finite element scheme for
              {L}andau-{L}ifschitz-{G}ilbert equation},
   JOURNAL = {Numer. Math.},
  FJOURNAL = {Numerische Mathematik},
    VOLUME = {128},
      YEAR = {2014},
    NUMBER = {3},
     PAGES = {407--430},
      ISSN = {0029-599X},
   MRCLASS = {65M60 (35K55 35Q60 65M12)},
  MRNUMBER = {3268842},
MRREVIEWER = {Nicolae Pop},
       DOI = {10.1007/s00211-014-0615-3},
       URL = {https://doi.org/10.1007/s00211-014-0615-3},
}

@article {AnGaSu21,
    AUTHOR = {An, Rong and Gao, Huadong and Sun, Weiwei},
     TITLE = {Optimal error analysis of {E}uler and {C}rank-{N}icolson
              projection finite difference schemes for {L}andau-{L}ifshitz
              equation},
   JOURNAL = {SIAM J. Numer. Anal.},
  FJOURNAL = {SIAM Journal on Numerical Analysis},
    VOLUME = {59},
      YEAR = {2021},
    NUMBER = {3},
     PAGES = {1639--1662},
      ISSN = {0036-1429},
   MRCLASS = {65M06 (35K61 65M12 65M15)},
  MRNUMBER = {4272916},
       DOI = {10.1137/20M1335431},
       URL = {https://doi.org/10.1137/20M1335431},
}

@article {FPPRS20,
    AUTHOR = {Di Fratta, Giovanni and Pfeiler, Carl-Martin and Praetorius,
              Dirk and Ruggeri, Michele and Stiftner, Bernhard},
     TITLE = {Linear second-order {IMEX}-type integrator for the (eddy
              current) {L}andau-{L}ifshitz-{G}ilbert equation},
   JOURNAL = {IMA J. Numer. Anal.},
  FJOURNAL = {IMA Journal of Numerical Analysis},
    VOLUME = {40},
      YEAR = {2020},
    NUMBER = {4},
     PAGES = {2802--2838},
      ISSN = {0272-4979},
   MRCLASS = {65M60 (65M12 78M10)},
  MRNUMBER = {4167063},
       DOI = {10.1093/imanum/drz046},
       URL = {https://doi.org/10.1093/imanum/drz046},
}

@article {GuLiWa22,
    AUTHOR = {Gui, Xinping and Li, Buyang and Wang, Jilu},
     TITLE = {Convergence of renormalized finite element methods for heat
              flow of harmonic maps},
   JOURNAL = {SIAM J. Numer. Anal.},
  FJOURNAL = {SIAM Journal on Numerical Analysis},
    VOLUME = {60},
      YEAR = {2022},
    NUMBER = {1},
     PAGES = {312--338},
      ISSN = {0036-1429},
   MRCLASS = {65M60 (35K55 35Q35 65M12)},
  MRNUMBER = {4377027},
       DOI = {10.1137/21M1402212},
       URL = {https://doi.org/10.1137/21M1402212},
}

@article {MPPR22,
    AUTHOR = {Mauser, Norbert J. and Pfeiler, Carl-Martin and Praetorius,
              Dirk and Ruggeri, Michele},
     TITLE = {Unconditional well-posedness and {IMEX} improvement of a
              family of predictor-corrector methods in micromagnetics},
   JOURNAL = {Appl. Numer. Math.},
  FJOURNAL = {Applied Numerical Mathematics. An IMACS Journal},
    VOLUME = {180},
      YEAR = {2022},
     PAGES = {33--54},
      ISSN = {0168-9274},
   MRCLASS = {65M60 (65M12 78M10)},
  MRNUMBER = {4432060},
       DOI = {10.1016/j.apnum.2022.05.008},
       URL = {https://doi.org/10.1016/j.apnum.2022.05.008},
}

@article {BarPro07,
    AUTHOR = {Bartels, S\"{o}ren and Prohl, Andreas},
     TITLE = {Constraint preserving implicit finite element discretization
              of harmonic map flow into spheres},
   JOURNAL = {Math. Comp.},
  FJOURNAL = {Mathematics of Computation},
    VOLUME = {76},
      YEAR = {2007},
    NUMBER = {260},
     PAGES = {1847--1859},
      ISSN = {0025-5718},
   MRCLASS = {65M60 (35K55 53C44 58E20)},
  MRNUMBER = {2336271},
MRREVIEWER = {Beny Neta},
       DOI = {10.1090/S0025-5718-07-02026-1},
       URL = {https://doi.org/10.1090/S0025-5718-07-02026-1},
}

@article {BGNY23,
    AUTHOR = {Bonito, Andrea and Guignard, Diane and Nochetto, Ricardo H.
              and Yang, Shuo},
     TITLE = {Numerical analysis of the {LDG} method for large deformations
              of prestrained plates},
   JOURNAL = {IMA J. Numer. Anal.},
  FJOURNAL = {IMA Journal of Numerical Analysis},
    VOLUME = {43},
      YEAR = {2023},
    NUMBER = {2},
     PAGES = {627--662},
      ISSN = {0272-4979},
   MRCLASS = {65M60 (74K20)},
  MRNUMBER = {4568427},
       DOI = {10.1093/imanum/drab103},
       URL = {https://doi.org/10.1093/imanum/drab103},
}

@article {AkrLub15,
    AUTHOR = {Akrivis, Georgios and Lubich, Christian},
     TITLE = {Fully implicit, linearly implicit and implicit-explicit
              backward difference formulae for quasi-linear parabolic
              equations},
   JOURNAL = {Numer. Math.},
  FJOURNAL = {Numerische Mathematik},
    VOLUME = {131},
      YEAR = {2015},
    NUMBER = {4},
     PAGES = {713--735},
      ISSN = {0029-599X},
   MRCLASS = {65M60 (65L06 65M12 65M20)},
  MRNUMBER = {3422451},
MRREVIEWER = {B\"{u}lent Karas\"{o}zen},
       DOI = {10.1007/s00211-015-0702-0},
       URL = {https://doi.org/10.1007/s00211-015-0702-0},
}

@article {FrJaMu02,
    AUTHOR = {Friesecke, Gero and James, Richard D. and M\"{u}ller, Stefan},
     TITLE = {A theorem on geometric rigidity and the derivation of
              nonlinear plate theory from three-dimensional elasticity},
   JOURNAL = {Comm. Pure Appl. Math.},
  FJOURNAL = {Communications on Pure and Applied Mathematics},
    VOLUME = {55},
      YEAR = {2002},
    NUMBER = {11},
     PAGES = {1461--1506},
      ISSN = {0010-3640},
   MRCLASS = {74K20 (49J45 74B20 74G65)},
  MRNUMBER = {1916989},
MRREVIEWER = {Georg K. Dolzmann},
       DOI = {10.1002/cpa.10048},
       URL = {https://doi.org/10.1002/cpa.10048},
}

\end{document}